\newtheorem{tm}{Theorem}
\newtheorem{rem}{Remark}
\newtheorem{rems}{Remarks}
\newtheorem{lm}{Lemma}
\newtheorem{ex}{Example}
\newtheorem{cor}{Corollary}
\newtheorem{prop}{Proposition}
\newtheorem{nota}{Notation}
\newtheorem{quest}{Question}
\newtheorem{cond}{Condition}
\begin{document}

\title{Descartes' rule of signs and moduli of roots}
\author{Vladimir Petrov Kostov}
\address{Universit\'e C\^ote d’Azur, CNRS, LJAD, France} 
\email{vladimir.kostov@unice.fr}

\begin{abstract}
  A hyperbolic polynomial (HP) is a real univariate polynomial with all roots
  real. By Descartes' rule of signs a HP with all coefficients nonvanishing
  has exactly $c$ positive and exactly $p$ negative roots counted with
  multiplicity, where $c$ and $p$ are the numbers of sign changes and sign
  preservations in the sequence of its coefficients. For $c=1$ and $2$, we
  discuss the question: When the
  moduli of all the roots of a HP are arranged in the increasing order on the
  real
  half-line, at which positions can be the moduli of its positive roots depending on the positions of the sign changes in the sequence of coefficients?\\ 

  {\bf Key words:} real polynomial in one variable; hyperbolic polynomial; sign
  pattern; Descartes' 
rule of signs\\ 

{\bf AMS classification:} 26C10; 30C15
\end{abstract}
\maketitle 

\section{Introduction}

We consider {\em hyperbolic polynomials (HPs)}, i.e. real polynomials 
in one real variable with all roots real. We limit our study to the 
case when the polynomials are monic and all coefficients are nonvanishing.  
In this case Descartes' rule 
of signs implies that a degree $d$ HP has exactly $c$ positive and exactly $p$ 
negative roots (counted with multiplicity), 
where $c$ and $p$ are the numbers of sign changes and sign preservations 
in the sequence of coefficients of the polynomial (hence $c+p=d$). A 
{\em sign pattern (SP)} is a finite sequence of ``$+$'' and/or ``$-$''-signs 
beginning with a $+$. If a HP 
is denoted by $P:=x^d+\sum _{j=0}^{d-1}a_jx^j$, then we say that $P$ defines (or realizes) 
the SP (of length $d+1$) $(+$, sgn\,$(a_{d-1})$, sgn\,$(a_{d-2})$, $\ldots$, 
sgn\,$(a_0))$. It is true that: 
\vspace{1mm}

1) for every SP of length $d+1$, there exists a HP defining the given SP, see Remark~\ref{remconcat};

2) the all-pluses SP of length $d+1$ (hence with $c=0$) is realizable by any monic HP having $d$ negative roots.
\vspace{1mm}

Descartes' rule of signs does not impose any inequalities between the moduli 
of the positive and the negative roots of $P$. In the present paper we consider,  
for $c=1$ and $c=2$, the question: 

\begin{quest}\label{quest1}
When the moduli of all the roots of a HP are arranged in the increasing order on the real half-line, at which positions can be the moduli of the positive roots depending on the positions of the sign changes in the sequence of coefficients? In particular, at which positions can they be in the case when there are no equalities between moduli of roots? 
\end{quest}
To make formulations easier we fix the following notation:

\begin{nota}\label{notanota}
{\rm (1) For $c=1$, we denote by $\Sigma _{m,n}$ the SP consisting of $m$ pluses 
followed by $n$ minuses, where $1\leq m,n\leq d$, $m+n=d+1$. For $c=2$, we denote 
by $\Sigma _{m,n,q}$ the SP consisting of $m$ pluses followed by $n$ minuses 
followed by $q$ pluses, where $1\leq m,n,q\leq d-1$, $m+n+q=d+1$.

(2) For $c=1$, we denote by $0<\alpha$ the modulus of the positive root and by 
$0<\gamma _1\leq \cdots \leq \gamma _{d-1}$ the moduli of the negative roots 
of a degree $d$ HP. 
For $c=2$, we denote by $0<\beta \leq \alpha$ the moduli of its positive and by 
$0<\gamma _1\leq \cdots \leq \gamma _{d-2}$ the moduli of its negative roots. 
We set $\gamma :=(\gamma _1$, $\ldots$, $\gamma _{d-c})$.

(3) By $e_k(\gamma )$ we denote 
the $k$th elementary symmetric function of the quantities $\gamma _i$, i.e. 
$e_k(\gamma ):=\sum _{1\leq j_1<j_2<\cdots <j_k\leq d-c}\gamma _{j_1}\gamma _{j_2}
\cdots \gamma _{j_k}$, and by $e_k(\hat{\gamma _i})$ we denote this 
symmetric function of the quantities $\gamma _1$, $\ldots$, $\gamma _{i-1}$, 
$\gamma _{i+1}$, $\ldots$, $\gamma _{d-c}$.

(4) For $c=2$, we denote by $m^*$, $n^*$ and $q^*$ the numbers of moduli of negative roots of a HP defining this sign pattern which are respectively larger than $\alpha$, belonging to the interval $(\beta , \alpha )$, and smaller than $\beta$. In the absence of an equality $\gamma _j=\alpha$ or $\gamma _j=\beta$, one has  $m^*+n^*+q^*=d-2$. For $c=1$, $m^*$ (resp. $n^*$) stands for the number of moduli of negative roots which are larger (resp. smaller) than $\alpha$. In the absence of an equality $\gamma _j=\alpha$, one has $m^*+n^*=d-1$.}
\end{nota}

For $c=1$ and $2$, Question~\ref{quest1} can be formulated as follows: 

\begin{quest}\label{quest2}
For a given degree $d$, what can be the values of $m^*$ depending on $m$ (if $c=1$) or of $m^*$ and $n^*$ depending on $m$ and $n$ (if $c=2$)? Especially, what can these values be in the {\em generic} case when all moduli of roots are distinct?
\end{quest}

The answer to this question is not trivial. Thus the SP $\Sigma _{d,1}$ is realizable only by HPs with $m^*=d-1$ (hence $n^*=0$), see Theorem~\ref{tm1change}. In the cases of the SPs $\Sigma _{1,n,1}$ and $\Sigma _{m,1,q}$ one has respectively  $m^*=m-1$, $n^*=0$, $q^*=q-1$ (see Theorem~\ref{tmm1q}) and $m^*=q^*=0$, $n^*=d-2$ (see Theorem~\ref{tm2extrem}). In other situations there are several possibilities for these values, see Examples~\ref{exexex}, \ref{exd5} and \ref{exgreat} or Theorems~\ref{tm1change}, \ref{tmq1} and~\ref{tmq1bis}. 

\begin{rems}\label{remsaction}
{\rm (1) Replacing $P(x)$ by $(-1)^dP(-x)$ means exchanging $c$ with $p$ and 
changing the signs of all roots of $P$. Therefore when asking the question how 
the moduli of the positive and negative roots of $P$ can be ordered on the real 
positive half-line it suffices to consider the cases with $c\leq [d/2]$. In particular, to obtain the answer to this question for $d\leq 5$, it is sufficient to study the cases with $c=1$ and $c=2$.

(2) Replacing $P$ by its {\em reverted} polynomial $P^R(x):=x^dP(1/x)$ means 
changing all roots of $P$ by their reciprocals and 
reading backward the SP defined by~$P$. In particular, the SP $\Sigma _{m,n}$ becomes $\Sigma _{n,m}$ and the SP $\Sigma _{m,n,q}$ becomes $\Sigma _{q,n,m}$. In order to have again a monic polynomial one could replace the polynomial $P^R(x)$ by $P^R(x)/a_0$.

(3) For real, but not necessarily hyperbolic degree $d$ polynomials, one can ask the question:} 

\begin{quest}\label{quest3}
Given a SP with $c$ sign changes and $p$ sign preservations, for which pairs of nonzero integers $(pos, neg)$ satisfying the conditions $pos\leq c$, $neg\leq p$ and $c-pos\in 2\mathbb{N}\cup 0\ni p-neg$ do there exist such polynomials defining the given SP and having exactly $pos$ positive and $neg$ negative roots, all distinct?
\end{quest} 

{\rm It seems that the question has been explicitly formulated for the first time in \cite{AJS}. The answer to it is not trivial and the exhaustive one is known for $d\leq 8$, see \cite{Gr}, \cite{AlFu}, \cite{FoKoSh}, \cite{KoCzMJ} and~\cite{KoMB}. The proof of the realizability of certain cases is often done by means of a {\em concatenation lemma}, see Lemma~\ref{lmconcat} in Section~\ref{secgeneral}.

(4) A tropical analog of Descartes' rule of signs is proposed in \cite{FoNoSh}. Different aspects of metric inequalities involving moduli of roots of polynomials are addressed in \cite{AKNR} and~\cite{Fo}.}
\end{rems}

The paper is structured as follows. In Section~\ref{secex} we give examples of SPs and HPs realizing these SPs with given strict inequalities between the quantities $\alpha$, $\beta$ and $\gamma _j$. In Section~\ref{secc1} we consider the case $c=1$, i.e. the case of $\Sigma _{m,n}$, see Theorem~\ref{tm1change} and Corollary~\ref{cor1change} which provide the exhaustive answer to Question~\ref{quest2} in the generic case. The sections after Section~\ref{secc1} concern the situation when $c=2$. In Section~\ref{sec1n1} we consider the case $c=2$, $m=q=1$, $n=d-1$, i.e. the case of $\Sigma _{1,n,1}$, see Theorem~\ref{tm2extrem}. In Section~\ref{secq1} we consider the case $c=2$, $q=1$, i.e. the case of $\Sigma _{m,n,1}$, see Theorems~\ref{tmq1} and~\ref{tmq1bis}. In Section~\ref{secn1} we consider the case $n=1$, i.e. the one of $\Sigma _{m,1,q}$, $m+q=d$, see Theorem~\ref{tmm1q}. In Section~\ref{secgeneral} we formulate a concatenation lemma (Lemma~\ref{lmconcat}) which plays a key role in the construction of HPs realizing given SPs. With the help of this lemma we explain how for $c=2$, $n\geq 2$, one can prove the realizability of certain cases. We also sum up the realizability results of the present paper for HPs of degrees from $2$ to~$5$, with $c=2$.

\section{Examples\protect\label{secex}}

\begin{ex}\label{exd2}
{\rm (1) For $d=1$, there are two possible SPs, namely 
$(+,+)$ and $(+,-)=\Sigma _{1,1}$, realizable respectively by $x+1$ (with $\gamma _1=1$) and $x-1$ (with $\alpha =1$).
\vspace{2mm}

(2) For $d=2$, one has the SPs $(+,+,+)$, $(+,+,-)=\Sigma _{2,1}$, $(+,-,+)=\Sigma _{1,1,1}$ and $(+,-,-)=\Sigma _{1,2}$. They are realizable by the HPs 

$$\begin{array}{lllcllll}(x+1)(x+2)&=&x^2+3x+2&,&(x+2)(x-1)&=&x^2+x-2&,\\ \\ (x-1)(x-2)&=&x^2-3x+2&{\rm and}&(x+1)(x-2)&=&x^2-x-2&,\end{array}$$
with self-evident values of $\alpha$, $\beta$, $\gamma _1$ and $\gamma _2$. For any  HPs realizing the SPs $\Sigma _{2,1}$ or $\Sigma _{1,2}$, one has $\gamma _1>\alpha$ or $\gamma _1<\alpha$ respectively.}

\end{ex}

\begin{ex}\label{exd3}
{\rm (1) For $d=3$, we show SPs, HPs realizing them and inequalities between the moduli of their roots. The SP $\Sigma _{1,2,1}$ is realizable by the HPs}

$$\begin{array}{llllll}
P_1&:=&(x+1)(x-1.5)(x-1.6)&=&x^3-2.1x^2-0.7x+2.4&,\\ \\ 
P_2&:=&(x+1)(x-1.5)(x-0.6)&=&x^3-1.1x^2-1.2x+0.9&{\rm and}\\ \\ 
P_3&:=&(x+1)(x-0.5)(x-0.6)&=&x^3-0.1x^2-0.8x+0.3&.\end{array}$$
{\rm with $\gamma _1<\beta <\alpha$ or $\beta <\gamma _1<\alpha$ or $\beta <\alpha <\gamma _1$ respectively. 
\vspace{2mm}

(2) The SPs $\Sigma _{2,1,1}$ and $\Sigma _{3,1}$ are realizable by the HPs 

$$\begin{array}{llllll}
P_4&:=&(x+1)(x-0.2)(x-0.1)&=&x^3+0.7x^2-0.28x+0.02&{\rm and}\\ \\ 
P_5&:=&(x+1)(x+2)(x-0.1)&=&x^3+2.9x^2+1.7x-0.2&,\\ \\ 
\end{array}$$
with $\beta <\alpha <\gamma _1$ and $\alpha <\gamma _1<\gamma _2$ respectively. Hence the SPs $\Sigma _{1,1,2}$ and $\Sigma _{1,3}$ are realizable by the HPs 
$P_4^R$ and $-P_5^R$, with $\gamma _1<\beta <\alpha$ and $\gamma _1<\gamma _2<\alpha$ respectively, see part (2) of Remarks~\ref{remsaction}.
\vspace{2mm}

(3) The SP $\Sigma _{2,2}$ is realizable by the HPs 

$$\begin{array}{llllll}
P_6&:=&(x+1)(x+2)(x-0.95)&=&x^3+2.05x^2-0.85x-1.9&,\\ \\ 
P_7&:=&(x+1)(x+2)(x-1.5)&=&x^3+1.5x^2-2.5x-3&{\rm and}\\ \\  P_8&:=&(x+1)(x+2)(x-2.5)&=&x^3+0.5x^2-5.5x-5&,\end{array}
$$
with $\alpha =0.95<\gamma _1=1<\gamma _2=2$, with 
$\gamma _1=1<\alpha =1.5<\gamma _2=2$ or with $\gamma _1=1<\gamma _2=2<\alpha =2.5$ respectively.}
\end{ex}

\begin{ex}\label{exexex}
{\rm (1) For $d=4$, one has  

$$\begin{array}{l}Q_1:=(x-1.2)(x-0.8)(x+0.97)(x+0.98)\\ \\ =x^4-0.05x^3-1.9894x^2-0.0292x+0.912576\end{array}$$
with $\beta =0.8<\gamma _1=0.97<\gamma _2=0.98<\alpha =1.2$, so one realizes 
the SP $\Sigma _{1,3,1}$.  
\vspace{2mm}

(2) Again for $d=4$, one can realize the SP $\Sigma _{2,2,1}$ in different ways, with different inequalities between the quantities $\alpha$, $\beta$, $\gamma _1$ and $\gamma _2$. We list some examples here:  

$$Q_2:=(x-4)(x-1)(x+2.1)(x+3)=x^4+0.1x^3-15.2x^2-11.1x+25.2~,$$
i.e. for $\beta =1<\gamma _1=2.1<\gamma _2=3<\alpha =4$;  

$$\begin{array}{l}Q_3:=(x-0.995)(x-0.99)(x+1)(x+1.001)\\ \\ 
=x^4+0.016x^3-1.985935x^2-0.01589995x+0.98603505~,
\end{array}$$
i.e. for  
$\beta =0.99<\alpha =0.995<\gamma _1=1<\gamma _2=1.001$; 

$$Q_4:=(x-1.6)(x-1.5)(x+1)(x+100)=x^4+97.9x^3-210.7x^2-67.6x+240~,$$
i.e. for $\gamma _1=1<\beta =1.5<\alpha =1.6<\gamma _2=100$;

$$\begin{array}{l}Q_5:=(x-1)(x-0.97)(x+0.99)(x+1.001)\\ \\ =x^4+0.021x^3-1.96128x^2-0.0209803x+0.9612603~,\end{array}$$
i.e. for $\beta =0.97<\gamma _1=0.99<\alpha =1<\gamma _2=1.001$. When one replaces the latter four HPs by their reverted ones (see part (2) of Remarks~\ref{remsaction}), then one realizes the SP $\Sigma _{1,2,2}$, with $\alpha$ (resp. $\beta$ and $\gamma _j$) changed to $1/\beta$ (resp. $1/\alpha$ and $1/\gamma _{3-j}$).}

\end{ex} 

\begin{ex}\label{exd5}
{\rm For $d=5$, consider the SP $\Sigma _{2,2,2}$ and some HPs defining this SP:}

$$\begin{array}{ll}
(x-1)(x-1.05)(x+1.08)(x+1.09)(x+1.1)&\\ \\ 
=x^5+1.22x^4-2.0893x^3-2.57819x^2+1.087824x+1.359666&,\\ \\ 
(x-1)(x-1.05)(x+1.02)(x+1.09)(x+1.1)&\\ \\ 
=x^5+1.16x^4-2.0977x^3-2.443760x^2+1.097331x+1.284129\\ \\ 
(x-1)(x-1.05)(x+1.02)(x+1.04)(x+1.1)&\\ \\ 
=x^5+1.11x^4-2.1012x^3-2.33506x^2+1.101036x+1.225224&,\\ \\ 
(x-1)(x-1.05)(x+1.02)(x+1.03)(x+1.04)&\\ \\ 
=x^5+1.04x^4-2.1019x^3-2.187206x^2+1.1018508x+1.1472552&,\\ \\ 
(x-1)(x-1.05)(x+0.99)(x+1.09)(x+1.1)&\\ \\ 
=x^5+1.13x^4-2.1019x^3-2.376545x^2+1.1020845x+1.2463605&{\rm and}\\ \\ 
(x-1)(x-1.05)(x+0.99)(x+1.04)(x+1.1)&\\ \\ 
=x^5+1.08x^4-2.1039x^3-2.26927x^2+1.103982x+1.189188&.
\end{array}$$
{\rm It is easy to check that these HPs and their reverted ones realize all possible generic cases with this SP.}
\end{ex}

\begin{ex}\label{exgreat}
{\rm For $d=7$, the HP}

$$\begin{array}{l}
(x-1)(x+0.99)(x+0.94)(x+0.93)(x+0.92)(x+0.91)(x-0.9)\\ \\  =x^7+2.79x^6+0.7855x^5-4.244835x^4-3.88785176x^3+\\ \\ 0.8027291316x^2+2.102352335x+0.6521052938
  \end{array}$$
{\rm realizes the SP $\Sigma _{3,2,3}$, with} 

$$\beta =0.9<\gamma _1=0.91<\gamma _2=0.92<\gamma _3=0.93<\gamma _4=0.94<\gamma _5=0.99<\alpha =1~.$$
{\rm In this example one has $n^*=5$, $m^*=q^*=0$. More generally, consider the HP}

$$\begin{array}{l}(x^2-1)(x^2-0.9^2)(x+0.9)^3\\ \\  =x^7+2.7x^6+0.62x^5-4.158x^4-3.5883x^3+0.86751x^2+1.9683x+0.59049\end{array}$$
{\rm realizing the same SP. One can perturb its roots at $-1$ and $-0.9$ (the latter is $4$-fold) to obtain HPs with $n^*=0$, $1$, $2$, $3$, $4$ or~$5$ and with all moduli of roots distinct.}
\end{ex}

\section{The case $c=1$\protect\label{secc1}}

\begin{tm}\label{tm1change}
(1) Consider the SP $\Sigma _{m,n}$, where 
$1\leq n\leq m$. This SP is realizable by and  only by polynomials with $n^*\leq 2n-2$. 
In particular, for $n=1$, one has $m^*=d-1$, $n^*=0$.
\vspace{2mm}

(2) All cases described after 
the theorem are realizable.
\end{tm}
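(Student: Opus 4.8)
The plan is to convert the sign-pattern condition into a monotonicity property of the elementary symmetric functions $e_k(\gamma)$ and then into a sharp counting inequality. First I would normalize $\alpha=1$ (scaling $x\mapsto\lambda x$ with $\lambda>0$ preserves the SP and the quantities $m^*,n^*$) and write $P=(x-1)Q$ with $Q=\prod_{j=1}^{d-1}(x+\gamma_j)=\sum_{k=0}^{d-1}e_k(\gamma)x^{\,d-1-k}$, $e_0=1$. Then the coefficient of $x^{d-k}$ in $P$ is $b_k:=e_k(\gamma)-e_{k-1}(\gamma)$ (with $e_d:=0$), so $\mathrm{sgn}\,(b_k)=\mathrm{sgn}\,(r_k-1)$ where $r_k:=e_k(\gamma)/e_{k-1}(\gamma)>0$. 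Hence realizing $\Sigma_{m,n}$ means precisely $r_k>1$ for $k\le m-1$ and $r_k<1$ for $k\ge m$. The structural fact I would establish next is that $r_k$ is strictly \emph{decreasing} in $k$: by Newton's inequalities $p_k:=e_k(\gamma)/\binom{d-1}{k}$ is log-concave, so $p_k/p_{k-1}$ is non-increasing, and $r_k=(p_k/p_{k-1})\cdot\frac{d-k}{k}$ is a product of two positive non-increasing factors. Consequently the $b_k$ have a single descent, at an index $k_0$ (the peak of $e_0,\dots,e_{d-1}$), no $b_k$ vanishing since all coefficients are nonzero, and realizing $\Sigma_{m,n}$ is equivalent to $k_0=m-1$. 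A short computation rewrites the target bound $n^*\le 2n-2$ as $k_0\le m^*+n^*/2$; the hypothesis $n\le m$ is exactly what makes the companion lower bound $k_0\ge m^*/2$ automatic, so only the upper bound must be proved.

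The inequality $k_0\le m^*+n^*/2$ is the main obstacle, and I would prove it with two lemmas. \emph{Lemma (a):} if all $\gamma_i<1$ (so $n^*=d-1$, $m^*=0$), then from the identities $\ell\,e_\ell(\gamma)=\sum_i\gamma_i\,e_{\ell-1}(\hat\gamma_i)$ and $\sum_i e_{\ell-1}(\hat\gamma_i)=(d-\ell)\,e_{\ell-1}(\gamma)$ one gets $\ell\,e_\ell(\gamma)<(d-\ell)\,e_{\ell-1}(\gamma)$, hence $r_\ell<(d-\ell)/\ell\le 1$ as soon as $\ell\ge d/2$, which forces $k_0\le\lfloor n^*/2\rfloor$. \emph{Lemma (b):} for an arbitrary positive multiset with peak $k_0$ and any extra root $b>0$, the augmented peak satisfies $k_0^+\le k_0+1$, because for $k\ge k_0+2$ both $e_k(\gamma)<e_{k-1}(\gamma)$ and $e_{k-1}(\gamma)<e_{k-2}(\gamma)$ hold (past the peak, by the monotonicity of $r$), whence $e_k+b\,e_{k-1}<e_{k-1}+b\,e_{k-2}$, i.e. the augmented ratio is $<1$ there. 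Combining, I start from the $n^*$ small roots (Lemma (a): peak $\le\lfloor n^*/2\rfloor$) and insert the $m^*$ large roots one at a time (Lemma (b): at most $+1$ each), obtaining $k_0\le m^*+\lfloor n^*/2\rfloor\le m^*+n^*/2$. This proves the ``and only by'' half of (1); the case $n=1$ then forces $n^*=0$ and $m^*=d-1$.

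For part (2) I would realize every admissible case by explicit construction followed by deformation. For fixed $(m^*,n^*)$ with $m^*+n^*=d-1$, Lemmas (a)--(b) and the companion bound show that the attainable values of $k_0$ fill the integer interval $[\lceil m^*/2\rceil,\;m^*+\lfloor n^*/2\rfloor]$, and the admissibility conditions $n^*\le 2n-2$, $n\le m$ guarantee that the required value $k_0=m-1$ lies in this interval. I would then exhibit two extremal families — one sending the large roots to $+\infty$ (peak as far right as possible) and one collapsing all roots toward $1$ (peak central/as far left as possible) — and note that, since the $e_k(\gamma)$ depend continuously on $\gamma$ and $k_0$ changes by exactly $1$ each time a single $b_k$ passes through $0$, a continuous deformation keeping the moduli distinct realizes every intermediate $k_0$, hence every case of Corollary~\ref{cor1change}. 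Where it is convenient to assemble such families from smaller blocks I would invoke the concatenation Lemma~\ref{lmconcat}, exactly in the spirit of Example~\ref{exgreat}. The only genuinely hard step is the counting inequality of the previous paragraph; maintaining distinct moduli throughout the deformation is routine bookkeeping.
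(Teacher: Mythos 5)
Your proof of the ``only by'' half of part (1) is correct and takes a genuinely different route from the paper's. The paper argues by contradiction: assuming $2n-1$ moduli $\gamma _j$ lie below $\alpha$, it forms the degree $2n$ polynomial $(x-\alpha )\prod _{j=1}^{2n-1}(x+\gamma _j)$, uses the identity $na_n=\sum _i(\delta _i-\alpha )e_{n-1}(\hat{\delta _i})$ to get $a_n<0$, concludes $a_j<0$ for all $j\leq n$ from Descartes' rule, and then checks that multiplying by the remaining factors $(x+\gamma _{\nu})$ keeps the last $n+1$ coefficients negative, contradicting the shape of $\Sigma _{m,n}$. You instead locate the unique descent index $k_0$ of the coefficient sequence via the strict monotonicity of $r_k=e_k/e_{k-1}$ (Newton's inequalities), bound the peak of the all-small subconfiguration by the same symmetric-function identity, and control the effect of inserting each large root by your Lemma (b). Both arguments hinge on $\ell \, e_{\ell}(\gamma )=\sum _i\gamma _ie_{\ell -1}(\hat{\gamma _i})$, but your peak-counting version gets the bound in one pass and is arguably cleaner; just define the peak as the largest $k$ with $e_k\geq e_{k-1}$ so that Lemma (b) survives an intermediate equality $e_{k_0+1}=e_{k_0}$, and observe that moduli equal to $\alpha$ are absorbed by treating them as ``large'' in the insertion step, which still yields $k_0\leq d-1-\lceil n^*/2\rceil$ and hence $n^*\leq 2n-2$.

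Part (2) as you present it has a genuine gap. The realizability claim, spelled out in the paragraph following the theorem, is not only about the generic case: it requires realizing, for every $s+r\leq 2n-2$, configurations with exactly $s$ moduli equal to $\alpha$, with every prescribed pattern of equalities among the $r$ small moduli, and (when $s+r=2n-2$) every prescribed pattern among the large moduli. Your deformation argument never touches these degenerate configurations, and dismissing them as ``routine bookkeeping'' is not justified: producing a prescribed multiplicity pattern while holding the descent at $k_0=m-1$ is exactly where the work lies. Even in the generic case your discrete intermediate-value argument is unfinished at its delicate point: ``collapsing all roots toward $1$'' sends $r_{d/2}$ to $1$ when $d$ is even, so the sign of the critical coefficient is decided by lower-order terms; this is precisely the first-order computation $h_n=(C_{2n-2}^{n-1}-C_{2n-2}^{n})((2n-1-s-r)u-rw)\varepsilon +o(\varepsilon )$ that the paper performs on the perturbation of $(x+1)^{2n-1}(x-1)$ before appending the factor $(1+\eta x)^{d-2n}$, and which your sketch omits. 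I would keep your argument for part (1) and adopt an explicit perturbative construction of the paper's type for part (2).
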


The cases in question are the ones when there are exactly $s$ quantities 
$\gamma _j$ 
which are equal to $\alpha$, exactly $r=n^*$ that are smaller than $\alpha$, 
where 
$s+r\leq 2n-2$, and exactly $d-1-s-r=m^*$ quantities $\gamma _j$ which are larger 
than 
$\alpha$. As for the quantities $\gamma _j$ which are smaller than $\alpha$, 
one can realize all 
possible cases of equalities and/or inequalities among them. When there are $<2n-2$ quantities 
$\gamma _j$ smaller than $\alpha$, the quantities $\gamma _j$ larger than $\alpha$ 
are presumed distinct. (However some more cases are realizable as well, 
see Remark~\ref{remmultvect}. Nothing is claimed about the cases 
which remain outside the reach of Remark~\ref{remmultvect}.) 
When there are exactly $2n-2$ quantities 
$\gamma _j$ smaller than $\alpha$, then among the quantities 
$\gamma _j$ larger than $\alpha$ one can have all 
possible equalities and/or inequalities. 

\begin{cor}\label{cor1change}
The SP $\Sigma _{m,n}$ with $1\leq m\leq n$ is realizable by and only by 
polynomials with $m^*\leq 2m-2$. In particular, for $n=d$, one has $m^*=0$, $n^*=d-1$.
\end{cor}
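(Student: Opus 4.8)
The plan is to deduce the corollary from Theorem~\ref{tm1change} by invoking the reversion symmetry recorded in part (2) of Remarks~\ref{remsaction}. The guiding observation is that the hypothesis $1\leq m\leq n$ of the corollary is precisely the hypothesis of Theorem~\ref{tm1change}(1) applied to the reverted sign pattern: reverting $\Sigma_{m,n}$ produces $\Sigma_{n,m}$, which has $n$ pluses followed by $m$ minuses, and since $m\leq n$ the number of minuses of $\Sigma_{n,m}$ does not exceed its number of pluses. Thus Theorem~\ref{tm1change}(1) applies to $\Sigma_{n,m}$ with the two integers interchanged, its "number of minuses" now being $m$.

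First I would set up the precise dictionary between a realizer $P$ of $\Sigma_{m,n}$ and its reverted polynomial $P^R(x)=x^dP(1/x)$, made monic upon division by its leading coefficient $a_0$ (which moves no root, and which is nonzero since $\Sigma_{m,n}$ has nonvanishing last entry). Reversion maps HPs with all coefficients nonvanishing to the same class of HPs, and it sends each root to its reciprocal: the unique positive root $\alpha$ of $P$ becomes the unique positive root $1/\alpha$ of $P^R$, and each modulus $\gamma_j$ of a negative root becomes $1/\gamma_j$. Since $t\mapsto 1/t$ reverses the order on the positive half-line, a modulus $\gamma_j>\alpha$ of $P$ corresponds to a modulus $1/\gamma_j<1/\alpha$ of $P^R$ and conversely; moreover a coincidence $\gamma_j=\alpha$ is equivalent to the coincidence $1/\gamma_j=1/\alpha$, so no equalities are created or destroyed. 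Hence reversion interchanges the two counts, $m^*(P)=n^*(P^R)$ and $n^*(P)=m^*(P^R)$.

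With this dictionary both directions follow at once. For the necessity ("only by"), if $P$ realizes $\Sigma_{m,n}$ then $P^R$ realizes $\Sigma_{n,m}$, and Theorem~\ref{tm1change}(1)---with its "$n$" equal to the number $m$ of minuses of $\Sigma_{n,m}$---gives $n^*(P^R)\leq 2m-2$; translating back yields $m^*(P)\leq 2m-2$. For the sufficiency ("by"), given any target value $m^*\leq 2m-2$, Theorem~\ref{tm1change} (its part (2) supplying the realizers across the whole admissible range) furnishes a realizer $Q$ of $\Sigma_{n,m}$ with $n^*(Q)=m^*$; then $Q^R$ realizes $\Sigma_{m,n}$ with $m^*(Q^R)=n^*(Q)=m^*$. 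As reversion is an involution on this class of monic HPs up to the harmless rescaling, realizability transfers in both directions and the stated "if and only if" holds. The "in particular" clause is the special case $n=d$: then $m+n=d+1$ forces $m=1$, so $m^*\leq 2m-2=0$, whence $m^*=0$ and $n^*=d-1$.

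I do not anticipate a genuine obstacle, since the corollary is a formal consequence of the theorem under an order-reversing involution. The only points demanding care are bookkeeping ones: verifying which integer assumes the role of the theorem's "number of minuses" after reversion, and confirming that reciprocation swaps the counts $m^*$ and $n^*$ while preserving the pattern of equalities $\gamma_j=\alpha$. Once these are checked, the entire equality/inequality structure of the realizable configurations is carried over verbatim.
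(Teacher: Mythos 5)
Your proposal is correct and follows exactly the route the paper takes: the paper's own proof of Corollary~\ref{cor1change} is just the remark that it "results from Theorem~\ref{tm1change}, see part (2) of Remarks~\ref{remsaction}", i.e. the reversion $P\mapsto P^R$ swaps $\Sigma_{m,n}$ with $\Sigma_{n,m}$ and interchanges $m^*$ with $n^*$, and the realizable cases transfer accordingly. Your write-up merely makes explicit the bookkeeping (reciprocation reverses order, the normalization by $a_0$ moves no roots) that the paper leaves implicit.
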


The corollary results from Theorem~\ref{tm1change}, see 
part (2) of Remarks~\ref{remsaction}. The realizable cases are easily 
deduced from the ones defined after Theorem~\ref{tm1change}. For $d=2$ and $d=3$, all generic  realizable cases covered by Theorem~\ref{tm1change} and Corollary~\ref{cor1change} are illustrated by Examples~\ref{exd2} and~\ref{exd3}.

\begin{rem}\label{rem1change}
 {\rm Suppose that one considers the question:} 
 \begin{quest}\label{quest4}
 For $d\geq 3$, given $n\in \mathbb{N}$, 
 $1\leq n\leq d$, what are the possible values of $n^*$?
 \end{quest} 
{\rm Theorem~\ref{tm1change} and Corollary~\ref{cor1change} imply that}

$$\max \, (0,2n-d-1)~\leq ~n^*~\leq ~\min \, (2n-2,d-1)~.$$
{\rm Indeed, from Theorem~\ref{tm1change} and Corollary~\ref{cor1change} one deduces the inequalities $n^*\leq 2n-2$ and $n^*\geq d-1-(2m-2)$, i.e. $n^*\leq 2n-2$ and $n^*\geq 2n-d-1$.}
\end{rem}

\begin{proof}[Proof of Theorem~\ref{tm1change}]
Suppose that $\gamma _j<\alpha$ for $j=1$, $\ldots$, $2n-1$. 
Set $\delta _j:=\gamma _j$, $j=1$, $\ldots$, $2n-1$, 
$\delta :=(\delta _1$, $\ldots$, $\delta _{2n-1})$ and 

$$Q:=(x-\alpha )\prod_{j=1}^{2n-1}(x+\delta _j)=
x^{2n}+a_{2n-1}x^{2n-1}+\cdots +a_1x+a_0~.$$ 
Hence $a_n=e_n(\delta )-\alpha e_{n-1}(\delta )$. Thus 

$$na_n=\sum _{i=1}^{2n-1}\delta _ie_{n-1}(\hat{\delta _i})-
\alpha \sum _{i=1}^{2n-1}e_{n-1}(\hat{\delta _i})=
\sum _{i=1}^{2n-1}(\delta _i-\alpha )e_{n-1}(\hat{\delta _i})<0~.$$
As $a_0=-\alpha \delta _1\cdots \delta _{2n-1}<0$ and as $P$ has one positive 
and $2n-1$ negative roots, one has exactly one sign change in the sequence 
$1$, $a_{2n-1}$, $\ldots$, $a_1$, $a_0$, so $a_j<0$ for $j\leq n$. 

Set $a_{-1}:=0$. The last $n+1$ coefficients of the polynomial 
$(x+\gamma _{2n})Q$ equal $a_{j-1}+\gamma _{2n}a_j<0$. In the same way the last 
$n+1$ coefficients of each of the polynomials 
$(\prod _{\nu =2n}^k(x+\gamma _{\nu}))Q$, $2n\leq k\leq d$, are negative which for $k=d$ 
leads to a contradiction with the definition of $\Sigma _{m,n}$.

To prove realizability of all cases mentioned after the lemma we observe first 
that for $R:=(x+1)^{2n-1}(x-1)=x^{2n}+g_{2n-1}x^{2n-1}+\cdots +g_1x+g_0$, 
one has $g_n=0$, $g_j>0$ for $j>n$ and $g_j<0$ for $j<n$. Consider for 
$\varepsilon >0$ small enough the polynomial 

$$\begin{array}{ccl}
\tilde{R}&:=&(x+1+\varepsilon u)^{2n-1-s-r}(x+1)^s(x+1-\varepsilon w)^r(x-1)\\ \\ 
&=&
x^{2n}+h_{2n-1}x^{2n-1}+\cdots +h_1x+h_0~,\end{array}$$
where $u>0$ and $w>0$; we set $\alpha :=1$. One has 

$$h_n=(C_{2n-2}^{n-1}-C_{2n-2}^n)((2n-1-s-r)u-rw)\varepsilon +o(\varepsilon )~,$$ 
with $C_{2n-2}^{n-1}-C_{2n-2}^n\neq 0$ and $2n-1-s-r\neq 0$, therefore one can choose $u$ and $w$ such that $h_n>0$ and $h_{n-1}<0$. 
After this 
one perturbs the quantities $\gamma _i$ which are smaller than $\alpha$ 
to obtain 
any possible case of equalities and/or inequalities among them by keeping the 
conditions $h_n>0$ and $h_{n-1}<0$. 
Then one sets 

$$K:=(1+\eta x)^{d-2n}\tilde{R}=x^d+\kappa _{d-1}x^{d-1}+\cdots +
\kappa _1x+\kappa _0~,$$ 
where $\eta >0$ is so small that $\kappa _n>0$ and $\kappa _{n-1}<0$. 
The polynomial $K$ has a $(d-2n)$-fold root $-1/\eta$ 
whose modulus is larger than $\alpha$. 

In the case when there are exactly $2n-2$ quantities $\gamma _j$ 
smaller than $\alpha$ one can perturb the $(d-2n)$-fold root $-1/\eta$ 
to obtain any possible case of equalities and inequalities among 
the $d-2n$ quantities $\gamma _j$ which are larger than $\alpha$. 
When there are less than $2n-2$ quantities $\gamma _j$ 
smaller than $\alpha$, not all quantities $\gamma _j$ larger than $\alpha$ 
can be obtained by perturbing $-1/\eta$. In this case one can make them all 
distinct by perturbing $-1/\eta$ and $-1-\varepsilon u$ into $d-2n$ and 
$2n-1-s-r$ distinct roots respectively.  

\end{proof}

\begin{rem}\label{remmultvect}
{\rm We call {\em multiplicity vector} 
a vector whose components are the multiplicities of the roots of a 
HP of a given degree; the roots are listed in the increasing order. 
Denote by $\vec{v}:=(\mu _1,\mu _2,\ldots ,\mu _k)$ 
the multiplicity vector of a degree $d-1-s-r$ HP. Hence $\mu _1+\cdots +\mu _k=d-1-s-r$. Suppose that $\vec{v}$ satisfies the following condition: 

\begin{cond}\label{condA} 
{\rm There exists an  
index $\nu$ such that $\mu _1+\cdots +\mu _{\nu}=d-2n$ hence 
$\mu _{\nu +1}+\cdots +\mu _k=2n-1-s-r$.} 
\end{cond}

The vector $\vec{v}$ can be viewed as the multiplicity vector of the roots of 
a polynomial which is obtained by perturbing the product $(x+1+\varepsilon u)^{2n-1-s-r}(1+\eta x)^{d-2n}$. When $\vec{v}$ satisfies Condition~\ref{condA}, the roots of $(x+1+\varepsilon u)^{2n-1-s-r}$ and the ones of $(1+\eta x)^{d-2n}$ can be perturbed independently. Thus when there are less than 
$2n-2$ quantities $\gamma _j$ 
smaller than $\alpha$, and when $\vec{v}$ satisfies Condition~\ref{condA}, 
one can realize the case of equalities and inequalities 
among the roots of the HP defined by the vector $\vec{v}$ by perturbing separately the roots 
$-1/\eta$ and $-1-\varepsilon u$. There remains to observe that for $\eta$ small enough, the root $-1/\eta$ is smaller than the root $-1-\varepsilon u$.}  
\end{rem}

\section{The case of $\Sigma _{1,n,1}$\protect\label{sec1n1}}

In the present section we consider SPs of the form $\Sigma _{1,n,1}$, i.e. with $c=2$, $m=q=1$ and $n=d-1$.

\begin{tm}\label{tm2extrem}
For $d\geq 4$, the SP $\Sigma _{1,d-1,1}$ is realizable by and 
only by HPs with $n^*=d-2$, $m^*=q^*=0$. 
\end{tm}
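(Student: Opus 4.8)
The realizability of $\Sigma_{1,d-1,1}$ by some HP is guaranteed in general by the first item of the introduction (see Remark~\ref{remconcat}), so the whole content of the statement is the \emph{only by} implication: every HP $P=(x-\alpha)(x-\beta)\prod_{j=1}^{d-2}(x+\gamma_j)$ realizing this SP must satisfy $m^*=q^*=0$, and then $n^*=d-2$ follows from Notation~\ref{notanota}(4). The plan is to first record what the two extreme coefficients of $P$ say. Writing $S:=\alpha+\beta$ and $\pi:=\alpha\beta$, the coefficient of $x^{d-1}$ equals $-\sum(\mathrm{roots})=\sum_j\gamma_j-S$, while the logarithmic derivative $P'(0)/P(0)$ shows that the coefficient of $x^{1}$ has the sign of $\sum_j\gamma_j^{-1}-(1/\alpha+1/\beta)$ (recall $P(0)=a_0>0$). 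Since in $\Sigma_{1,d-1,1}$ both coefficients are negative, I obtain the two key strict inequalities $\sum_j\gamma_j<S$ and $\sum_j\gamma_j^{-1}<S/\pi$.

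The heart of the argument is to prove $m^*=0$, i.e.\ $\gamma_j<\alpha$ for all $j$. I would argue by contradiction, assuming $\gamma_{d-2}\ge\alpha$, and set $M:=\gamma_{d-2}$. Deleting this one modulus, the two inequalities above give $T:=\sum_{j\le d-3}\gamma_j<S-M$ and $U:=\sum_{j\le d-3}\gamma_j^{-1}<S/\pi-1/M$; both right-hand sides are positive, the second because $M\ge\alpha$ forces $1/M\le 1/\alpha<S/\pi$. By the Cauchy--Schwarz (equivalently AM--HM) inequality applied to the $d-3\ge1$ positive numbers $\gamma_1,\dots,\gamma_{d-3}$ one has $T\,U\ge(d-3)^2$, whereas multiplying the two strict bounds yields $T\,U<f(M)$, where $f(M):=(S-M)(S/\pi-1/M)$.

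It then remains to estimate $f$ on $[\alpha,S)$. A direct computation gives $f(\alpha)=1$, and the only critical point of $f$ on $(0,S)$ is $M=\sqrt{\alpha\beta}\le\alpha$; hence $f$ is monotone on $[\alpha,S)$, decreasing from $f(\alpha)=1$ to $0$, so $f(M)\le 1$ for every $M\ge\alpha$. Combining, $(d-3)^2\le T\,U<f(M)\le 1$, which is impossible as soon as $d\ge4$ (then $(d-3)^2\ge1$). This contradiction proves $\gamma_j<\alpha$ for all $j$, i.e.\ $m^*=0$. Since $\Sigma_{1,d-1,1}$ is palindromic it is invariant under reversion (Remarks~\ref{remsaction}(2)); applying the statement just proved to the reverted polynomial $P^{R}$ (whose negative-root moduli are the $\gamma_j^{-1}$ and whose positive-root moduli are $1/\alpha\le 1/\beta$) yields $q^*=0$ for $P$. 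As $m^*=q^*=0$ rules out any equality $\gamma_j=\alpha$ or $\gamma_j=\beta$, one gets $n^*=d-2$, as claimed.

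The step I expect to be the only real obstacle is the sharp estimate $f(M)\le1$ with equality $f(\alpha)=1$: it is exactly this that makes the otherwise slack Cauchy--Schwarz bound bite in the boundary case $d=4$, where $(d-3)^2=1$ (for $d\ge5$ the bound is comfortably strict). One must also take care that the inequalities are strict in the right places — the bounds $\sum_j\gamma_j<S$ and $\sum_j\gamma_j^{-1}<S/\pi$ are strict, so $T\,U<f(M)$ is strict — and that every quantity being multiplied is positive, which is precisely why the hypothesis $M\ge\alpha$ (not merely $M>\beta$) is what one feeds into the estimate for $f$.
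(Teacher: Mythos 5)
Your proof is correct, and it takes a genuinely different route from the paper's in the only place where the theorem is delicate, namely $d=4$. Both arguments start from the same two Vieta-type inequalities $\alpha +\beta >\sum _j\gamma _j$ and $1/\alpha +1/\beta >\sum _j1/\gamma _j$, i.e.\ the paper's (\ref{Vieta}). From there the paper argues by pigeonhole on pairs of indices (two moduli $\geq \alpha$, or two $\leq \beta$, or one suitably placed on each side, already violate one of the two inequalities); this settles $d\geq 5$ outright but leaves, for $d=4$, the configuration $\gamma _2\geq \alpha \geq \beta \geq \gamma _1$ unrefuted, and the paper disposes of it by a separate deformation argument: decrease $\alpha$ continuously until the first of the equalities $\alpha =\beta$, $\alpha +\beta =\gamma _1+\gamma _2$, $1/\alpha +1/\beta =1/\gamma _1+1/\gamma _2$ occurs, and derive an algebraic contradiction in each case. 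You replace all of this by one uniform computation: assuming $M:=\gamma _{d-2}\geq \alpha$, Cauchy--Schwarz gives $TU\geq (d-3)^2\geq 1$ for the remaining $d-3$ moduli, while the two (strict) Vieta inequalities give $TU<f(M)=(S-M)(S/\pi -1/M)$, and $f'(M)=S(1/M^2-1/\pi )\leq 0$ on $[\alpha ,S)$ together with $f(\alpha )=1$ forces $TU<1$ --- a contradiction for all $d\geq 4$ at once. The sharpness $f(\alpha )=1$ is indeed what makes the boundary case $d=4$ (where $(d-3)^2=1$) bite, and the needed strictness comes from $a_{d-1}<0$, $a_1<0$. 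Your treatment of $q^*=0$ by reverting the palindromic SP, and of realizability via Remark~\ref{remconcat}, is consistent with the paper. What your approach buys is uniformity and brevity (no case split on $d$, no homotopy); what the paper's buys is that the $d\geq 5$ part needs no calculus and its $d=4$ deformation technique is reused elsewhere. One wording caveat: $m^*=q^*=0$ alone would not exclude $\gamma _j=\alpha$ or $\gamma _j=\beta$, but since you run the contradiction under the weak hypothesis $M\geq \alpha$, you do in fact obtain the strict inequalities $\beta <\gamma _j<\alpha$ required for $n^*=d-2$.
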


\begin{rem}
{\rm For $d=2$, no quantity $\gamma _j$ is defined, see Example~\ref{exd2}. For $d=3$, all possible cases of strict inequalities between the quantities $\alpha$, $\beta$ and $\gamma _1$ are realizable, see the HPs $P_1$, $P_2$ and $P_3$ in Example~\ref{exd3}, so Theorem~\ref{tm2extrem} does not hold true for $d=3$.}
\end{rem}

\begin{proof}
Consider a polynomial $Q:=x^d+a_{d-1}x^{d-1}+\cdots +a_0$ realizing the SP 
$\Sigma _{1,d-1,1}$. Hence $a_{d-1}<0$ and  $a_1<0$, i.e.

$$\begin{array}{rccccc}
-\alpha -\beta +\sum _{j=1}^{d-2}\gamma _j&=&a_{d-1}&<&0&{\rm and}\\ \\  
(\alpha \beta \prod_{j=1}^{d-2}(-\gamma _j))
(1/\alpha +1/\beta -\sum _{j=1}^{d-2}1/\gamma _j)&=&(-1)^{d-1}a_1&&&
\end{array}$$
which implies

\begin{equation}\label{Vieta}
\alpha +\beta >\sum _{j=1}^{d-2}\gamma _j~~~\, \, \, {\rm and}~~~\, \, \, 
1/\alpha +1/\beta >\sum _{j=1}^{d-2}1/\gamma _j~.
\end{equation}
If for at least two indices $j$ one has $\gamma _j\geq \alpha$ (resp. 
$\gamma _j\leq \beta$), then the first (resp. the second) of conditions 
(\ref{Vieta}) fails. The same holds true if there exist two indices $j_1$ and 
$j_2$ for which one has $\gamma _{j_1}\geq \alpha \geq \gamma _{j_2}\geq \beta$ 
(resp. $\alpha \geq \gamma _{j_1}\geq \beta \geq \gamma _{j_2}$). 
Thus for $d\geq 5$, the only 
possibility conditions (\ref{Vieta}) to hold true is to have 
$\beta <\gamma _j<\alpha$ for $j=1$, $\ldots$, $d-2$. 

For $d=4$, 
one has either $\alpha >\gamma _2\geq \gamma _1>\beta >0$ or  
$\gamma _2\geq \alpha \geq \beta \geq \gamma _1>0~(*)$. So to prove the theorem 
one has to refute possibility $(*)$. One can notice that it is 
impossible to have $\gamma _2=\alpha$ or $\beta = \gamma _1$ in which case 
at least one of conditions (\ref{Vieta}) fails. Therefore one has $\gamma _2-\gamma _1>\alpha -\beta ~(**)$.

Suppose that inequalities $(*)$ and (\ref{Vieta}) hold true. 
Then one can decrease 
continuously $\alpha$ until for the first time at least one of the three 
equalities holds true:

$$\alpha =\beta ~~~\, \, \, {\rm or}~~~\, \, \, 
\alpha +\beta =\gamma _1+\gamma _2~~~\, \, \, {\rm or}~~~\, \, \, 
1/\alpha +1/\beta =1/\gamma _1+1/\gamma _2~.$$
If this is $\alpha =\beta$, then $2\beta \geq \gamma _1+\gamma _2$ and 
$2/\beta \geq (\gamma _1+\gamma _2)/(\gamma _1\gamma _2)$, that is 

$$4\geq (\gamma _1+\gamma _2)^2/(\gamma _1\gamma _2)$$
which leads to $(\gamma _1-\gamma _2)^2\leq 0$. This is possible only if $\alpha =\beta =\gamma _1=\gamma _2$ which is a contradiction. If the equality is 
$\alpha +\beta =\gamma _1+\gamma _2$, then 

$$1/\alpha +1/\beta =(\gamma _1+\gamma _2)/(\alpha \beta )\geq 
(\gamma _1+\gamma _2)/(\gamma _1\gamma _2)$$
hence $\alpha \beta \leq \gamma _1\gamma _2$. Set $s:=(\alpha +\beta )/2$. Then 

$$\alpha =s+u~~~,~~~\beta =s-u~~~,~~~\gamma _1=s+v~~~,~~~\gamma _2=s-v~~~,~~~
0<u<v<s~.$$
(The inequality $u<v$ results from $(*)$ and $(**)$.) This implies the contradiction 
$\gamma _1\gamma _2=s^2-v^2<s^2-u^2=\alpha \beta$. 
Finally, if  
$1/\alpha +1/\beta =1/\gamma _1+1/\gamma _2=2t$, then 

$$1/\alpha =t-r~~~,~~~1/\beta =t+r~~~,~~~
1/\gamma _2=t-w~~~,~~~1/\gamma _1=t+w~~~,~~~0<r<w<t~,$$
hence $\alpha \beta =1/(t^2-r^2)<1/(t^2-w^2)=\gamma _1\gamma _2$. However one 
must have 

$$\alpha +\beta =2t/(t^2-r^2)>2t/(t^2-w^2)=\gamma _1+\gamma _2$$
which is a contradiction.

\end{proof}

\section{The case $q=1$\protect\label{secq1}}

Now we consider SPs of the form $\Sigma _{m,n,1}$, i.e. with $c=2$ and $q=1$. 

\begin{tm}\label{tmq1}
(1) For $d\geq 4$, a HP defining a SP $\Sigma _{m,n,1}$ satisfies one of the two conditions:
\vspace{2mm}

i) its root of smallest modulus is positive; 
\vspace{2mm}

ii) one has $\gamma _1\leq \beta \leq \alpha <\gamma _2\leq \cdots 
\leq \gamma _{d-2}$.
\vspace{2mm}

(2) If condition ii) is satisfied, then $n=2$ or $n=3$.
\vspace{2mm}

(3) For $n=3$ (resp. for $n=2$), and for any $d\geq 5$, there exist polynomials with roots satisfying conditions ii) for all possible choices of equalities or strict inequalities (resp. conditions ii) with all inequalities strict). 

\end{tm}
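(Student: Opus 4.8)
My plan is to extract from the hypothesis ``$P$ realizes $\Sigma_{m,n,1}$'' only the signs of its two lowest coefficients, $a_1<0$ and $a_0>0$, and to show that these already govern everything. Writing the roots of $P$ as $\rho_1,\dots,\rho_d$, the identity $\sum_i 1/\rho_i=-a_1/a_0$ gives at once the scalar inequality $1/\alpha+1/\beta>\sum_{j=1}^{d-2}1/\gamma_j$, which I will call $(\mathrm V)$. Set $A:=1/\alpha\le B:=1/\beta$ and $C_j:=1/\gamma_j$. For part (1), if $\gamma_1>\beta$ then $\beta$ is the unique root of least modulus and alternative i) holds. Otherwise $\gamma_1\le\beta$, i.e.\ $C_1\ge B$, and I claim $\gamma_2>\alpha$: were $\gamma_2\le\alpha$ we would have $C_1\ge B$ and $C_2\ge A$, whence $\sum_j C_j\ge A+B$, contradicting $(\mathrm V)$. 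Then $\alpha<\gamma_2\le\cdots\le\gamma_{d-2}$, which together with $\gamma_1\le\beta\le\alpha$ is exactly condition ii). Here $d\ge4$ enters only to guarantee that $\gamma_2$ exists.

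For part (2), assume ii). Since $P$ realizes $\Sigma_{m,n,1}$, its negative coefficients are precisely $a_1,\dots,a_n$, so $n\ge 2\iff a_2<0$ and $n\le 3\iff a_4>0$; it therefore suffices to prove these two sign statements. I will use $a_k=(-1)^k\,\alpha\beta\bigl(\prod_j\gamma_j\bigr)\,e_k(1/\rho)$, where $e_k(1/\rho)$ is the $k$-th elementary symmetric function of $A,B,-C_1,\dots,-C_{d-2}$, so that the two statements read $e_2(1/\rho)<0$ and $e_4(1/\rho)>0$. Expanding by how many of $A,B$ are chosen gives $e_2(1/\rho)=AB-(A+B)\sum_j C_j+e_2(C)$. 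Splitting off $C_1$ and putting $S:=\sum_{j\ge 2}C_j$, note first that $(\mathrm V)$ together with $C_1\ge B$ forces $S<A$. Then $e_2(1/\rho)=\bigl(AB-(A+B)C_1\bigr)+S\bigl(C_1-(A+B)\bigr)+e_2(C_2,\dots,C_{d-2})\le -B^2+S^2/2<-B^2+A^2/2\le -B^2/2<0$, using $(A+B)C_1\ge(A+B)B$, $C_1<A+B$, and $e_2(C_2,\dots)\le S^2/2<A^2/2\le B^2/2$. This gives $a_2<0$, hence $n\ge 2$.

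The inequality $e_4(1/\rho)>0$ is the crux. Expanding as before yields $e_4(1/\rho)=e_4(C)-(A+B)e_3(C)+AB\,e_2(C)$, and splitting off $C_1$ with $f_k:=e_k(C_2,\dots,C_{d-2})$ turns this into $f_4-(A+B-C_1)f_3-\bigl((A+B)C_1-AB\bigr)f_2+ABC_1S$. The dominant positive term $ABC_1S\ge AB^2S$ must beat the two negative ones, which I will bound by Maclaurin's inequalities $f_k\le S^k/k!$ combined with $A+B-C_1\le A$ and, crucially, the trade-off $C_1\le A+B-S$ furnished by $(\mathrm V)$, which keeps $(A+B)C_1-AB$ small exactly when $S$ (and hence $f_2,f_3$) is large. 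This reduces everything to a one-variable polynomial inequality in $S\in(0,A)$ whose worst case is $\alpha=\beta$ (i.e.\ $A=B$), where it still holds with room to spare. Carrying out this balancing cleanly is the main obstacle: the naive bound $C_1<A+B$ is too lossy, and one must exploit the coupling between $C_1$ and $S$. Once $a_2<0$ and $a_4>0$ are in hand, $2\le n\le 3$, proving part (2).

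For part (3) I will use a degree-lifting argument. Starting from a base HP of degree $5$ realizing $\Sigma_{2,3,1}$ (for $n=3$) or $\Sigma_{3,2,1}$ (for $n=2$) under ii)\,---\,built explicitly and distinguished by the sign of $a_3$, since by part (2) one has $n=3\iff a_3<0$ and $n=2\iff a_3>0$\,---\,I multiply by a factor $(x+\delta)$ with $\delta$ larger than all current moduli. For $\delta$ large the new coefficients are $\hat a_k=a_{k-1}+\delta a_k$, so every interior sign is preserved and a single extra $+$ is inserted just after the leading term; thus $\Sigma_{m,n,1}\mapsto\Sigma_{m+1,n,1}$, the degree grows by one, $n$ is unchanged, and the new root $-\delta$ (of modulus $>\alpha$) keeps ii) intact while raising $m^*$. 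Iterating reaches every $d\ge 5$. Prescribed equalities among the large $\gamma_j$ are produced by letting the successive $\delta$'s coincide or not, and the finitely many equality patterns among the small moduli $\gamma_1\le\beta\le\alpha$ are built into the base example; since $a_3\neq 0$ is an open condition, small perturbations keep $n$ fixed. For $n=2$ the assertion concerns only strict inequalities, which is the easier generic case.
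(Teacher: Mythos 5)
Your part (1) and the first half of part (2) are correct and follow essentially the same route as the paper: everything is extracted from the single inequality $(\mathrm{V})$: $1/\alpha+1/\beta>\sum_j 1/\gamma_j$, coming from $-a_1/a_0>0$, and your termwise estimate showing $e_2(1/\rho)<0$ (hence $a_2<0$, hence $n\ge 2$) is sound and logically equivalent to the paper's refutation of $\Sigma_{d-1,1,1}$. The genuine gap sits exactly where you flag it. The inequality $a_4>0$, i.e. (in your notation) $f_4-(A+B-C_1)f_3-\bigl((A+B)C_1-AB\bigr)f_2+ABC_1S>0$, is never actually proved: you describe a reduction via Maclaurin bounds and the coupling $C_1\le A+B-S$, assert that the resulting one-variable inequality in $S\in(0,A)$ "holds with room to spare" at the claimed worst case $A=B$, and then concede that "carrying out this balancing cleanly is the main obstacle." An acknowledged, unverified reduction is not a proof, and since this is precisely the step that yields $n\le 3$, part (2) is incomplete as written. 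For comparison, the paper avoids any global balancing: from $(\mathrm{V})$ and $C_1\ge B$ it gets $S<A$, and then disposes of the two negative terms by three separate termwise bounds --- $(A+B)C_1f_2\le 2BC_1f_2<ABC_1S$ (using $f_2<S^2/2<AS/2$), $Bf_3\le C_1f_3$, and $Af_3\le ABf_2$ (using $f_3<f_2S<Af_2\le Bf_2$) --- whose sum immediately gives $a_4/a_0>f_4\ge 0$. Either adopt such a termwise grouping or write out and verify your one-variable inequality over the whole parameter range, not only at $A=B$.

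Part (3) has a gap of the same nature: the degree-$5$ base polynomials are "built explicitly" only in name, and for $n=3$ the claim covers \emph{every} pattern of equalities among $\gamma_1\le\beta\le\alpha$ and among the large $\gamma_j$, so several base configurations must actually be exhibited and their coefficient signs checked. Moreover, your scheme of appending roots $-\delta$ with $\delta$ "larger than all current moduli" cannot by itself produce an equality between a new modulus and an existing one: the sign preservation of $\hat a_k=a_{k-1}+\delta a_k$ needs $\delta$ large compared with ratios of coefficients, not merely larger than the largest modulus, so letting successive $\delta$'s "coincide or not" does not reach all equality patterns without further argument. The paper handles this by starting from fully degenerate witnesses --- $(x+s)^s(x-1)^2(x+1)$ for $n=3$, whose five lowest coefficients are computed explicitly, and $(1+\varepsilon x)^{d-3}(x+1)(x-1.5)(x-1.6)$ for $n=2$ --- and then bifurcating the multiple roots inside the open sign conditions. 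Your lifting step $\Sigma_{m,n,1}\mapsto\Sigma_{m+1,n,1}$ is fine as far as it goes, but as submitted the $a_4$ half of part (2) and all of part (3) are outlines rather than proofs.
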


\begin{rem}
{\rm For $d=4$ and $n=3$, one deals with the SP $\Sigma _{1,3,1}$; this case is covered by Theorem~\ref{tm2extrem}. For $d=4$ and $n=2$, see the polynomials in part (2) of Example~\ref{exexex}; they correspond to all generic cases allowed by Theorem~\ref{tmq1}. For the case $n=q=1$ see Section~\ref{secn1}.}
\end{rem}

\begin{proof}
We denote a HP defining a SP $\Sigma _{m,n,1}$ by 
$T:=x^d+a_{d-1}x^{d-1}+\cdots +a_1x+a_0$. Recall that 

\begin{equation}\label{RRR}
1/\alpha +1/\beta -1/\gamma _1-\cdots -1/\gamma _{d-2}=-a_1/a_0>0
\end{equation}
(to see this it suffices to consider the polynomial 
$T^R(x):=x^dT(1/x)=a_0x^d+a_1x^{d-1}+\cdots +1$ whose roots are the reciprocals 
of the roots of $T(x)$). 
Hence at most one of the quantities 
$1/\gamma _j$ can be $\geq 1/\beta$ (so this is $1/\gamma _1$ and 
$\gamma _1\leq \beta$), 
otherwise inequality (\ref{RRR}) 
fails. If there exists exactly one such quantity, then 
for $j>1$, one has $\gamma _j>\alpha$. This proves part (1). 

Part (2). Suppose that condition {\em ii)} is satisfied. 
Consider the polynomial $T^R$ defined above. We denote by $1/\gamma$ 
the $(d-3)$-tuple $(1/\gamma _2,\ldots ,1/\gamma _{d-2})$ and by $e_j$ 
the quantity $e_j(1/\gamma )$. 
One has 

\begin{equation}\label{a4}\begin{array}{ccl}
a_4/a_0&=&e_4+(1/(\alpha \beta ))e_2+(1/\gamma _1)e_3+
(1/(\alpha \beta \gamma _1))e_1\\ \\ &&-(1/\alpha +1/\beta )e_3-
(1/\alpha +1/\beta )(1/\gamma _1)e_2~.\end{array}
\end{equation}
The following inequality holds true: 

\begin{equation}\label{e1e2}
e_2=((e_1)^2-\sum _{j=2}^{d-2}(1/\gamma _j)^2)/2
<(e_1)^2/2
\end{equation}
The inequalities (\ref{RRR}) and $1/\beta \leq 1/\gamma _1$ imply 
$1/\alpha >e_1$. 
Thus (see (\ref{e1e2})) $e_2<(e_1)^2/2<e_1/(2\alpha )$ which implies 

\begin{equation}\label{ineq1}
(1/\alpha +1/\beta )(1/\gamma _1)e_2<(2/(\beta \gamma _1))e_2<
(1/(\alpha \beta \gamma _1))e_1~.
\end{equation}
The inequality 

\begin{equation}\label{ineq2}
(1/\beta )e_3\leq (1/\gamma _1)e_3
\end{equation}
results from $1/\beta \leq 1/\gamma _1$ and the inequality

\begin{equation}\label{ineq3}
(1/\alpha )e_3\leq (1/(\alpha \beta ))e_2
\end{equation}
follows from $e_3<e_2e_1<(1/\alpha )e_2\leq (1/\beta )e_2$. Summing up 
inequalities 
(\ref{ineq1}), (\ref{ineq2}) and (\ref{ineq3}) one obtains $a_4/a_0>0$ (see (\ref{a4})) hence 
$a_4>0$ and $n\leq 3$.

There remains to exclude the case $n=1$. Suppose that the polynomial $T$ 
defines the SP $\Sigma _{d-1,1,1}$. Without loss of generality we assume 
that $\gamma _1 =1$ (this can be obtained by a linear change of the variable 
$x$). 
If $a_0>0$, $a_1<0$ and $a_2>0$, then 

$$\begin{array}{ll}
1/\alpha +1/\beta -1-e_1=-a_1/a_0>0&{\rm and}\\ \\ 
-1/\alpha -1/\beta +e_1+1/(\alpha \beta )-(1/\alpha +1/\beta )e_1+e_2=a_2/a_0>0&.
\end{array}$$
Set $\Delta :=1/\alpha +1/\beta -1$. Hence $\Delta >e_1>0$ and 

$$-1/\alpha -1/\beta +1/(\alpha \beta )>\Delta e_1-e_2>(e_1)^2-e_2>0$$
which by $\alpha \geq \beta \geq \gamma _1=1$ is impossible.

Part (3). For $n=3$, consider the polynomials $Y_s:=(x+s)^s(x-1)^2(x+1)$ 
(hence $d=s+3$, so $s\geq 2$). 
By Descartes' rule of signs there are exactly two sign changes in the sequence 
of coefficients of the polynomial $Y_s$. The last $5$ coefficients of $Y_s$ 
are the same 
as the ones of the polynomial 

$$W_s:=(C_s^4s^{s-4}x^4+C_s^3s^{s-3}x^3+C_s^2s^{s-2}x^2+
C_s^1s^{s-1}x+s^s)(x-1)^2(x+1)~,$$
where if $s-j<0$, then the term $C_s^js^{s-j}x^j$ is missing. 
These coefficients equal

$$\begin{array}{llll}
W_{s,0}=s^s&,&W_{s,1}=0&,\\ \\ W_{s,2}=-(1/2)(3s+1)s^{s-1}&,& 
W_{s,3}=-(1/3)(s-1)(s+1)s^{s-2}&\\ \\ {\rm and}&&W_{s,4}=
(1/8)(s+1)(3s^2+3s-2)s^{s-3}&.
\end{array}$$
For $s\geq 2$, one has $W_{s,2}<0$, $W_{s,3}<0$ and $W_{s,4}>0$. By an 
infinitesimal 
shift of the $s$-fold root at $(-s)$ one obtains the condition $W_{s,1}<0$. This is possible to do, because the coefficient of $x$ in the polynomial $(x+s+\varepsilon )^s(x-1)^2(x+1)$ equals $-s^{s-1}\varepsilon +o(\varepsilon )$. 
After this, 
if one wants to have strict inequalities instead of some of the equalities in 
the string 
of conditions {\em ii)} one can use infinitesimal shifting followed by 
bifurcation of 
the roots. 

For $n=2$, consider the polynomial $P_1$ of part (1) of Example~\ref{exd3}. 
For 
$\varepsilon >0$ small enough, the polynomial $(1+\varepsilon x)^{d-3}P_1$ 
defines 
the SP $\Sigma _{d-2,2,1}$ and has a $(d-3)$-fold root at $-1/\varepsilon$ and 
simple 
roots at $-1$, $1.5$, $1.6$. One can then perturb the root at $-1/\varepsilon$ 
to make all the roots of $(1+\varepsilon x)^{d-3}P_1$ distinct.

\end{proof}

In the following theorem we consider polynomials defining the SP 
$\Sigma _{m,n,1}$ with $m+n=d$ and satisfying the condition 
$\beta <\gamma _1$.

\begin{tm}\label{tmq1bis}
(1) If $m\leq n$, then there are $\leq 2m-1$ quantities $\gamma _j$ 
which are $\geq \alpha$ (that is, for $m<n-1$, one has 
$\gamma _{d-2m-1}<\alpha$).
\vspace{2mm}

(2) If $m\leq n$, then all cases when there are exactly $s\leq 2m-2$ quantities $\gamma _j$ not less than $\alpha$ are realizable by HPs.
\vspace{2mm}

(3) If $n<m$, then there are $\leq 2n-1$ quantities $\gamma _j$ 
which are $\leq \alpha$ (that is, one has 
$\gamma _{2n}>\alpha$).
\vspace{2mm}

(4) If $n<m$, then all cases when there are exactly $s\leq 2n-2$ 
quantities $\gamma _j$ not larger than $\alpha$ are realizable by HPs.
\end{tm}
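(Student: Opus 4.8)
The plan is to treat the two regimes $m\le n$ (parts (1)–(2)) and $n<m$ (parts (3)–(4)) separately, by mirror-image arguments, exploiting that the hypothesis $\beta<\gamma_1$ says precisely that the root of smallest modulus of $T$ is the positive root $\beta$ (condition i) of Theorem~\ref{tmq1}). The governing phenomenon is the same as in Theorem~\ref{tm1change}: an excess of the $\gamma_j$ on one side of $\alpha$ forces a long run of coefficients of constant sign at the corresponding end of the coefficient sequence, which then collides with the length $m$ (resp.\ $n$) of the first (resp.\ middle) block of $\Sigma_{m,n,1}$. For the upper bounds (1) and (3) I would argue by contradiction, building an auxiliary sub-product out of the extremal $\gamma_j$ together with $\alpha$, exactly as the polynomial $Q$ is built in the proof of Theorem~\ref{tm1change}.

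For part (1), suppose that $2m$ of the moduli satisfy $\gamma_j\ge\alpha$ and let $g_1,\dots,g_{2m}$ be the largest of them. Set $Q:=(x-\alpha)\prod_{j=1}^{2m}(x+g_j)$ and write $E_i:=e_i(g)$. The coefficient of $x^{\,2m+1-i}$ in $Q$ equals $E_i-\alpha E_{i-1}$, and the double-counting identity used for $na_n$ in the proof of Theorem~\ref{tm1change} gives
\[
E_i-\alpha E_{i-1}=\sum_{j}E_{i-1}(\hat g_j)\Bigl(\frac{g_j}{i}-\frac{\alpha}{2m-i+1}\Bigr),
\]
so $E_i-\alpha E_{i-1}>0$ for every $i\le m$, because $g_j\ge\alpha$ and $2m-i+1>i$ there. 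Hence the top $m+1$ coefficients of $Q$ are positive. Multiplying $Q$ by the factors $(x+\gamma_j)$ attached to the remaining (smaller) negative roots can only lengthen this leading positive run, and the resulting polynomial is exactly $U:=T/(x-\beta)$; thus $U$ has at least $m+1$ positive leading coefficients. It then remains to pass from $U$ to $T=(x-\beta)U$ and to show that, under $\beta<\gamma_1$, multiplication by $(x-\beta)$ cannot shorten the leading positive run enough to bring it down to $m$, contradicting the fact that $\Sigma_{m,n,1}$ begins with exactly $m$ plus signs. Part (3) is the mirror statement at the low-degree end: one forms the sub-product of $\alpha$ with the \emph{smallest} $\gamma_j$ (equivalently one reverts $T$ and repeats the computation on $T^R$), the run in question now being the trailing block produced by the small roots, and again $\beta<\gamma_1$ is used to control the extra factor.

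For the realizability statements (2) and (4) I would follow the perturbation scheme of Theorem~\ref{tm1change}. One starts from an explicit model HP, of the shape $(x-1)(x+1)^{s}\cdots$ with $\alpha:=1$, whose critical coefficient signs can be read off and then adjusted by an infinitesimal shift of a multiple root, exactly as $\tilde R$ and $K$ are used there; one installs $\beta$ as a positive root chosen smaller than every $\gamma_j$ (so that $\beta<\gamma_1$ holds and the pattern remains $\Sigma_{m,n,1}$), pads the degree with a factor $(1+\eta x)^{\cdots}$ contributing a negative root of large modulus, and finally perturbs the multiple roots so as to realize every prescribed configuration of equalities and strict inequalities with exactly $s\le 2m-2$ (resp.\ $s\le 2n-2$) of the $\gamma_j$ on the required side of $\alpha$. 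The concatenation Lemma~\ref{lmconcat} can be invoked to keep track of the sign patterns produced under these products.

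The main obstacle is the last step of the bound: quantifying precisely how the factor $(x-\beta)$ changes the length of the leading (resp.\ trailing) positive run. Multiplying by $(x+c)$ with $c>0$ never shortens such a run, whereas $(x-\beta)$ can, and the hypothesis $\beta<\gamma_1\le$ (all moduli of roots of $U$) is exactly what limits this loss. Making this control sharp is delicate, and it is what separates the bound $2m-1$ proved in (1) from the value $2m-2$ shown realizable in (2) (and likewise for (3) and (4)); the boundary cases with an equality $\gamma_j=\alpha$ must be handled separately, since there the relevant coefficient $E_i-\alpha E_{i-1}$ merely vanishes instead of being strictly positive, which is precisely the slack that allows one extra modulus to reach $\alpha$.
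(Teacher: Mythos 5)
There is a genuine gap in your argument for the bounds (1) and (3), and it sits exactly where you flag it: the passage from $U:=T/(x-\beta)$ back to $T$. Your computation for $Q:=(x-\alpha)\prod_{j=1}^{2m}(x+g_j)$ is correct and does show that $U$ has at least $m+1$ leading positive coefficients; but the conclusion you need is that the coefficient of $x^{d-m}$ in $T=(x-\beta)U$, namely $u_{d-m-1}-\beta u_{d-m}$, is nonnegative, and the hypothesis $\beta<\gamma_1$ gives no evident control of the ratio $u_{d-m-1}/u_{d-m}$ of two elementary-symmetric-type expressions in $\alpha$ and the $\gamma_j$. Unlike multiplication by $(x+c)$ with $c>0$, the factor $(x-\beta)$ can genuinely shorten a leading positive run, and no inequality in your sketch rules this out. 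Since this is the step that would produce the contradiction with the sign pattern $\Sigma_{m,n,1}$ having exactly $m$ leading pluses, parts (1) and (3) are not proved. The paper avoids this difficulty entirely: it differentiates. The derivative $P'$ of a HP realizing $\Sigma_{m,n,1}$ is hyperbolic and realizes $\Sigma_{m,n}$ (the troublesome trailing $+$ disappears), so Corollary~\ref{cor1change} applies directly and gives $\gamma'_{d-2m}<\alpha'$; Rolle interlacing ($\gamma_{j-1}\leq\gamma'_j\leq\gamma_j$ and $\alpha'\leq\alpha$) then transfers the bound to the roots of $P$ itself. Part (3) is the mirror statement obtained by applying the same device to $(P')^R=dP^R-x(P^R)'$, which realizes $\Sigma_{n,m}$. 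If you want to salvage your direct approach, you would have to prove the missing coefficient inequality, which looks at least as hard as the theorem.

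Your parts (2) and (4) are essentially correct and match the paper in substance, though they are more elaborate than needed: rather than redoing the perturbation construction of Theorem~\ref{tm1change} from scratch, it suffices to take a degree $d-1$ HP $Q$ realizing $\Sigma_{m,n}$ with the prescribed position of $\alpha$ among the $\gamma_j$ (available by Corollary~\ref{cor1change} and the discussion following Theorem~\ref{tm1change}) and set $P:=(x-\varepsilon)Q$ with $\varepsilon>0$ small; Lemma~\ref{lmconcat} guarantees the sign pattern $\Sigma_{m,n,1}$ and the new root $\varepsilon=\beta$ is automatically of smallest modulus.
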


\begin{proof}
Part (1). Suppose that a HP $P$ realizes the SP $\Sigma _{m,n,1}$. 
Hence its derivative is hyperbolic and realizes the SP 
$\Sigma _{m,n}$. 
Denote by $\alpha '$ and $\gamma _1'\leq \cdots \leq \gamma _{d-2}'$, 
the moduli of the latter's positive and negative roots. By   
Corollary~\ref{cor1change} at most $2m-2$ of the quantities $\gamma _j'$ are 
$\geq \alpha '$, i.e. the inequality $\gamma _{d-2m}'<\alpha '$ 
holds true (this inequality is meaningful only for $m<n$). For $j\geq 2$, one has 
$\gamma _{j-1}\leq \gamma _j'\leq \gamma _j$, so 
$\gamma _{d-2m-1}\leq \gamma _{d-2m}'<\alpha '$ (the left inequality
is meaningful only for $m<n-1$). On the other hand 
$\alpha '\leq \alpha$ which proves part (1) of the theorem.

Part (2). Denote by $Q$ a degree $d-1$ HP defining the SP $\Sigma _{m,n}$ and realizing the case $\gamma _{d-s-2}<\alpha \leq \gamma _{d-s-1}$; 
this case is defined without reference to $\beta$. To realize 
it is  
possible by Corollary~\ref{cor1change} and the lines that follow it. 
Set $P:=(x-\varepsilon )Q$, where $\varepsilon >0$ is small enough, 
so $P$ defines the SP $\Sigma _{m,n,1}$. (This statement is in fact a particular case of  Lemma~\ref{lmconcat}.)
Hence the root $\varepsilon$ of the polynomial $P$ is 
its root of smallest 
modulus and the polynomial $P$ realizes the case 

$$\beta <\gamma _1\leq \cdots \leq \gamma _{d-s-2}<
\alpha \leq \gamma _{d-s-1}\leq \cdots \leq \gamma _{d-2}~.$$

Part (3). Suppose that a HP $P$ realizes the SP $\Sigma _{m,n,1}$. 
Hence the reverted polynomial $P^R:=x^dP(1/x)$ is hyperbolic and 
realizes the SP $\Sigma _{1,n,m}$, and the polynomial 
$U:=dP^R-x(P^R)'$ realizes the SP $\Sigma _{n,m}$. Denote by 
$\alpha ^u$ and $\gamma _1^u\leq \cdots \leq \gamma _{d-2}^u$ the 
moduli of the latter's positive and negative roots. 
Hence $\alpha ^u\leq \alpha ^r$ (the superscript $r$ indicates 
moduli of roots of $P^R$) and by Corollary~\ref{cor1change}, 
$\gamma _{d-2n}^u<\alpha ^u$. The zeros of the polynomials $P^R$ 
and $U$ interlace, so $\gamma _{j-1}^r\leq \gamma _j^u\leq \gamma _j^r$. 
Thus $\gamma _{d-2n-1}^r\leq \gamma _{d-2n}^u<\alpha ^u\leq \alpha ^r$. 
The roots of $P^R$ are the reciprocals of the roots of $P$. Hence 
$\gamma ^r_j=1/\gamma _{d-1-j}$ and $\alpha ^r=1/\alpha$, therefore 
the inequality $\gamma _{d-2n-1}^r<\alpha ^r$ is equivalent to 
$1/\gamma _{2n}<1/\alpha$, i.e. to $\gamma _{2n}>\alpha$.

The proof of part (4) is done by analogy with the proof of 
part (2) -- one first finds a degree $d-1$ polynomial $Q$ 
defining the SP $\Sigma _{m,n}$ and 
realizing the case $\gamma _s\leq \alpha <\gamma _{s+1}$, and 
then constructs the polynomial
$P=(x-\varepsilon )Q$ which realizes the case 

$$\beta <\gamma _1\leq \cdots \leq \gamma _{s}\leq 
\alpha <\gamma _{s+1}\leq \cdots \leq \gamma _{d-2}~.$$
\end{proof}

\section{The case $n=1$\protect\label{secn1}}

We consider here SPs of the form $\Sigma _{m,1,q}$ (hence $c=2$, $n=1$ and $d=m+q$).

\begin{tm}\label{tmm1q}
The SP $\Sigma _{m,1,q}$ is realizable by and only by polynomials satisfying the condition 

\begin{equation}\label{ineqtm}
\gamma _1\leq \cdots \leq \gamma _{q-1}<\beta <\alpha <\gamma _q\leq \cdots \leq \gamma _{m+q-2}~,
\end{equation}
that is, with $m^*=m-1$, $n^*=0$ and $q^*=q-1$.
\end{tm}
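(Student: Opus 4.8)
The plan is to treat the two implications separately, and to do the easy one first. For \emph{realizability} I would build the polynomial one root at a time, starting from $(x-\alpha)(x-\beta)$ with $0<\beta<\alpha$, which defines $\Sigma_{1,1,1}$. Multiplying by a factor $(x+L)$ with $L>\alpha$ very large lengthens the leading block of pluses (the coefficient of $x^{d-1}$ becomes $L$-dominated, hence positive, and likewise for the other high-order coefficients), producing $\Sigma_{2,1,1}$ and one negative root of modulus $>\alpha$; iterating this $m-1$ times with distinct large values of $L$ yields $\Sigma_{m,1,1}$. Then multiplying successively by factors $(x+s)$ with $s>0$ small appends pluses at the tail (each new low-order coefficient is $s$-dominated by the previous constant term, hence positive), producing $\Sigma_{m,1,q}$ together with $q-1$ negative roots of modulus $<\beta$. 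Each such step is a special case of the concatenation Lemma~\ref{lmconcat} (indeed it is the same device $P\mapsto (x-\varepsilon)Q$ and $P\mapsto(1+\varepsilon x)^kQ$ already used in the proofs of Theorems~\ref{tmq1} and~\ref{tmq1bis}); choosing the $L$'s and $s$'s distinct realizes exactly the strict ordering (\ref{ineqtm}).

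For \emph{necessity} I want $n^*=0$, $q^*=q-1$ and $m^*=m-1$. Since $m^*+n^*+q^*=d-2=(m-1)+(q-1)$, it is enough to prove the two upper bounds $q^*\le q-1$ and $m^*\le m-1$ together with the rigidity $n^*=0$ (equivalently $q^*+m^*=d-2$). The two structural facts I would first extract from the sign pattern are Vieta-type inequalities coming from the signs of $a_{d-1}$ and $a_1$: when $m\ge 2$ the coefficient $a_{d-1}>0$ gives $\sum_j\gamma_j>\alpha+\beta$, and when $q\ge 2$ the relation $1/\alpha+1/\beta-\sum_j 1/\gamma_j=-a_1/a_0$ (obtained from the reverted polynomial as in the derivation of (\ref{RRR})) together with $a_1>0$, $a_0>0$ gives $\sum_j 1/\gamma_j>1/\alpha+1/\beta$. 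These are exactly the \emph{reverses} of the inequalities (\ref{Vieta}) used in Theorem~\ref{tm2extrem}, and they already push the moduli $\gamma_j$ away from the band $[\beta,\alpha]$.

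To pin the counts I would argue by induction on $d$, exploiting that dropping the last sign of the sign pattern of $P$ produces that of $P'$, so $P'$ realizes $\Sigma_{m,1,q-1}$ when $q\ge 2$. Rolle's theorem yields the interlacing $\gamma_j\le\gamma_j'\le\gamma_{j+1}$ for the negative-root moduli, and, because $\Sigma_{m,1,q-1}$ still has $c=2$, the two positive roots of $P'$ satisfy $\beta'<\beta$ and $\beta<\alpha'<\alpha$. By the inductive hypothesis the $q-2$ smallest moduli of $P'$ lie below $\beta'$, whence $\gamma_j\le\gamma_j'<\beta'<\beta$ places the $q-2$ smallest $\gamma_j$ of $P$ below $\beta$; applying the same reasoning to the reverted polynomial $P^R$, which realizes $\Sigma_{q,1,m}$ (part (2) of Remarks~\ref{remsaction}), places the $m-2$ largest $\gamma_j$ above $\alpha$. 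This leaves only the one or two ``middle'' moduli $\gamma_{q-1},\gamma_q$ to locate, and the base cases to settle: $q=1$ (the pattern $\Sigma_{m,1,1}$, for which Theorem~\ref{tmq1} already supplies condition i), i.e. $\beta<\gamma_1$), its mirror $m=1$, and $d=3$, where $\Sigma_{2,1,1}$ is handled directly since $a_{d-1}>0$ forces $\gamma_1>\alpha+\beta$.

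The hard part will be excluding any modulus in the closed band $[\beta,\alpha]$, i.e. proving $n^*=0$ and correctly placing the middle roots, because the two global inequalities above do \emph{not} suffice once $d$ is large: a middle root can coexist with one very large and one very small $\gamma_j$ that restore both sums. Here I expect to need the positivity of the \emph{intermediate} coefficients as well. Generalizing the computation of $a_4/a_0$ in the proof of Theorem~\ref{tmq1} and the symmetric-function with AM--HM argument of Theorem~\ref{tm2extrem}, I would show that the presence of a single $\gamma_j\in[\beta,\alpha]$ is incompatible with the simultaneous sign conditions imposed on two suitably chosen coefficients of $P$ (or of $P^R$) by $\Sigma_{m,1,q}$. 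Once $n^*=0$ is secured, the interlacing inequalities of the previous paragraph become sharp, the bounds $q^*\le q-1$ and $m^*\le m-1$ are attained, and (\ref{ineqtm}) follows.
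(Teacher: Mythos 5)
Your realizability half is fine and is essentially the paper's own route (it is the canonical concatenation construction of Lemma~\ref{lmconcat} and Remark~\ref{remconcat}). The necessity half, however, has a genuine gap precisely at the point you yourself flag as ``the hard part''. Your Rolle/interlacing induction only places $q-2$ moduli below $\beta$ (via $P'$) and $m-2$ above $\alpha$ (via $P^R$), i.e.\ it yields the \emph{lower} bounds $q^*\ge q-2$, $m^*\ge m-2$; the \emph{upper} bounds $q^*\le q-1$, $m^*\le m-1$ that you declare sufficient are never derived, so even if $n^*=0$ were known, the two middle moduli could a priori both land on the same side of the band. The exclusion of the band $[\beta,\alpha]$ itself is only conjectured to follow from ``two suitably chosen coefficients'', with no candidate coefficients or computation exhibited -- and the global Vieta inequalities you do prove are, as you note, too weak. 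Finally, your base case $q=1$ is not settled: condition i) of Theorem~\ref{tmq1} gives only $\beta<\gamma_1$, whereas $\Sigma_{m,1,1}$ requires the much stronger $\alpha<\gamma_1\le\cdots\le\gamma_{d-2}$.

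The idea you are missing is the paper's Lemma~\ref{lmnotexist}: if $\gamma_\nu=\alpha$ or $\gamma_\nu=\beta$, then $P(x)=P(-x)=0$ for some $x>0$, forcing the even and the odd parts of $P$ to vanish simultaneously at a positive point -- impossible when exactly one coefficient of $P$ is negative. This makes the walls $\{\gamma_\nu=\alpha\}$ and $\{\gamma_\nu=\beta\}$ untouchable by any HP defining $\Sigma_{m,1,q}$. The paper then connects $P$ to $xP'$ by the homotopy $P_r:=rxP'+(1-r)P$, which remains hyperbolic and defines the same SP for all $r\in[0,1)$; since no equality $\gamma_\nu=\alpha$ or $\gamma_\nu=\beta$ can occur along the path, the triple $(m^*,n^*,q^*)$ is constant and is read off at $r=1$ from the inductive hypothesis applied to $P'$. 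One and the same device disposes of the band exclusion, the placement of both middle moduli, and the base case $\Sigma_{m,1,1}$ (there one runs the homotopy on $P^R$, whose derivative realizes the $c=1$ pattern $\Sigma_{m,1}$ and is controlled by Theorem~\ref{tm1change}). I would recommend replacing your projected coefficient computation by this evaluation-parity lemma plus the continuity argument.
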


\begin{rems}
{\rm (1) For $n=2$, unlike $n=1$, it is not true that there is a unique possibility for $m^*$ and $q^*$, see Examples~\ref{exexex}, \ref{exd5} and \ref{exgreat}. It would be interesting to know whether for $c=n=2$, there is an upper bound for the possible values of the quantity~$n^*$ (over all $m\geq 1$ and $q\geq 1$).

(2) The statement of part (1) of Theorem~\ref{tm1change} for $m=d$, $n=1$ (resp. the second sentence of Corollary~\ref{cor1change}) could be considered as an extension of the statement of Theorem~\ref{tmm1q} to the case $m=d$, $n=1$, $q=0$ (resp. $m=0$, $n=1$, $q=d$).}\end{rems}

\begin{proof}
$1^0$. We need the following lemma:

\begin{lm}\label{lmnotexist}
There exists no polynomial realizing the SP $\Sigma _{m,1,q}$ and 
satisfying the condition $\gamma _{\nu}=\alpha$ or $\gamma _{\nu}=\beta$ for some $\nu$ ($1\leq \nu \leq m+q-2$). 
\end{lm}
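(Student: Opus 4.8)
The plan is to exploit the fact that the hypothesis $\gamma_{\nu}=\alpha$ forces \emph{both} $\alpha$ and $-\alpha$ to be roots of a polynomial $P:=x^d+\sum_{k=0}^{d-1}a_kx^k$ realizing $\Sigma_{m,1,q}$: indeed $\alpha$ is the positive root of largest modulus, while $-\gamma_{\nu}=-\alpha$ is a negative root. Hence $P(\alpha)=P(-\alpha)=0$. Separating $P$ into its even and odd parts, $P_{\mathrm{even}}(x):=\sum_{k\ \text{even}}a_kx^k$ and $P_{\mathrm{odd}}(x):=\sum_{k\ \text{odd}}a_kx^k$, the two equalities $P(\pm\alpha)=0$ are equivalent, upon adding and subtracting them, to the simultaneous vanishing $P_{\mathrm{even}}(\alpha)=0$ and $P_{\mathrm{odd}}(\alpha)=0$.

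The crucial observation is that the sign pattern $\Sigma_{m,1,q}$ has exactly one negative coefficient. That single negative coefficient lies either among the even-degree terms or among the odd-degree ones, so the complementary part of $P$ has \emph{all} its coefficients positive. I would then evaluate that all-positive part at the strictly positive number $\alpha$: every surviving monomial contributes a strictly positive quantity, so the value is strictly positive, contradicting the vanishing just obtained. Since $\beta$ is likewise a positive root and, when $\gamma_{\nu}=\beta$, $-\beta$ is a negative root, the same argument verbatim with $\beta$ in place of $\alpha$ settles the second alternative.

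The only step requiring care — and the nearest thing to an obstacle — is checking that the all-positive part is not the zero polynomial, i.e. that it really possesses a surviving term whose sign can be read off. This is exactly where the standing hypotheses enter: all coefficients of $P$ are nonvanishing and $d=m+q\geq 2$, so the even part always contains the nonzero term $a_0$ and the odd part always contains the nonzero term $a_1$. Whichever of the two parts is the all-positive one therefore contains $a_0>0$ or $a_1>0$ and is genuinely positive at $\alpha$ (respectively at $\beta$). I would also remark that the argument uses nothing about $\Sigma_{m,1,q}$ beyond the presence of a \emph{single} negative coefficient, which is precisely the feature distinguishing the case $n=1$ and which explains why no Vieta-type estimates, such as those in the proof of Theorem~\ref{tm2extrem}, are needed here.
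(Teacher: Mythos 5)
Your proof is correct and is essentially the paper's own argument: both observe that $P(\gamma_\nu)=P(-\gamma_\nu)=0$ forces the even and odd parts of $P$ to vanish separately at the positive number $\gamma_\nu$, while the single negative coefficient of $\Sigma_{m,1,q}$ lies in only one of these parts, so the other is a nonzero sum of positive terms and cannot vanish. Your extra check that the all-positive part is nonempty (via $a_0\neq 0$ or $a_1\neq 0$) is a harmless refinement of what the paper leaves implicit.
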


\begin{proof}
Suppose that such a polynomial $P:=\sum _{j=0}^da_jx^j$ exists. Then 
$P(\pm \gamma _{\nu})=0$ which implies $\sum _{k=0}^{[d/2]}a_{2k}(\gamma _{\nu})^{2k}=\sum _{k=0}^{[(d-1)/2]}a_{2k+1}(\gamma _{\nu})^{2k+1}=0$. This is impossible, because $\gamma _{\nu}>0$ and exactly one of the coefficients $a_j$ is negative while the rest are positive.
\end{proof}

$2^0$. For $m=n=q=1$, any hyperbolic degree $2$ polynomial has just two positive roots and there is nothing to prove. For $m=n=1$ and $q=2$, one has 

$$\alpha +\beta -\gamma _1>0~~~\, {\rm and}~~~\, \alpha \beta -(\alpha +\beta )\gamma _1>0~.$$
If $\gamma _1\geq \alpha$ (resp. if $\gamma _1\geq \beta$), then this leads to the contradiction $\alpha \beta /(\alpha +\beta )>\gamma _1>\alpha$, i.e. $\beta /(\alpha +\beta )>1$ (resp. $\alpha \beta /(\alpha +\beta )>\gamma _1>\beta$, i.e. $\beta /(\alpha +\beta )>1$). Hence $\gamma _1<\beta <\alpha$.

$3^0$. We perform induction on $q$ for $m$ fixed. We do this first for $m=1$ the induction base being the case $m=n=1$, $q=2$, see $2^0$; however the induction step is performed in the same way for any $m\geq 1$ fixed.

Suppose that a HP $P$ realizes the SP $\Sigma _{m,1,q}$ with $q>1$. Then its derivative $P'$ is a degree $d-1$ HP which realizes the SP $\Sigma _{m,1,q-1}$. Consider the family of polynomials $P_r:=rxP'+(1-r)P$, $r\in [0,1]$. For $r<1$, every polynomial of this family defines the SP $\Sigma _{m,1,q}$. Every polynomial of this family is hyperbolic. By Descartes' rule of signs every polynomial $P_r$ has exactly two positive roots and $d-2$ or $d-3$ negative ones (for $r\in [0,1)$ and $r=1$ respectively; for $r=1$, one of its roots equals $0$). For $r=1$, by inductive assumption, the moduli of the roots satisfy the inequalities 

$$0=\gamma _1<\gamma _2\leq \cdots \leq \gamma _{q-1}<\beta <\alpha <\gamma _q\leq \cdots \leq \gamma _{m+q-2}.$$
The roots of $P_r$ depend continuously on $r$, and for no value of $r\in [0,1]$ does one have an equality of the form 
$\gamma _{\nu}=\alpha$ or $\gamma _{\nu}=\beta$, see Lemma~\ref{lmnotexist}. Hence for 
$r\in [0,1)$, inequalities (\ref{ineqtm}) hold true. From our reasoning follows the proof of Theorem~\ref{tmm1q} for $m=1$. 

$4^0$. If a HP $P$ realizes the SP $\Sigma _{1,1,q}$ (hence $d=q+1$), then the HP 
$P^R:=x^dP(1/x)$ realizes the SP $\Sigma _{q,1,1}$ and $(P^R)'$ realizes the SP $\Sigma _{q,1}$. Hence the moduli $\alpha '$, $\gamma _1'$, $\ldots$, $\gamma _{d-2}'$ of the roots of the polynomial $(P^R)'$ satisfy the conditions 

$$\alpha '<\gamma _1'\leq \cdots \leq \gamma _{d-2}'~,$$
see Theorem~\ref{tm1change}. Consider the family of polynomials $(P^R)_r:=rx(P^R)'+(1-r)P^R$, $r\in [0,1]$. For $r\neq 1$ and close to $1$, the moduli of the roots of the polynomial $(P^R)_r$ satisfy the inequalities 

$$\beta <\alpha <\gamma _1\leq \cdots \leq \gamma _{d-2}~.$$
One can apply Lemma~\ref{lmnotexist} to conclude (by analogy with the reasoning about the family $P_r$ in $3^0$) that for $r\in [0,1)$, the above sequence of inequalities holds true and hence Theorem~\ref{tmm1q} holds true for any SP $\Sigma _{q,1,1}$ which we for convenience denote by $\Sigma _{m,1,1}$.  

$5^0$. Now one proves Theorem~\ref{tmm1q} by induction on $q$ for each $m$ fixed by applying the reasoning developed in $3^0$. The induction base are the cases $\Sigma _{m,1,1}$, see~$4^0$.

\end{proof}

\section{Comments on the case $c=2$\protect\label{secgeneral}}

\subsection{Concatenation lemma and its applications}

In the present subsection we consider the sign pattern $\Sigma _{m,n,q}$ with $n>1$ in the generic case. We remind that the quantities $m^*$, $n^*$ and $q^*$ are defined in Notation~\ref{notanota}.

We explain how using Theorem~\ref{tm1change} and Corollary~\ref{cor1change} one can prove the realizability of certain cases. To this end we recall a {\em concatenation lemma} proved in~\cite{FoKoSh}. We say that a real (not necessarily hyperbolic) polynomial realizes the pair $(pos, neg)$ if it has exactly $pos$ positive and exactly $neg$ negative roots counted with multiplicity. In what follows all real roots are presumed distinct.

\begin{lm}\label{lmconcat}
Suppose that the
monic polynomials $P_1$ and $P_2$ of degrees $d_1$ and $d_2$ with SPs $\sigma _1=(+,\hat{\sigma}_1)$ and $\sigma _2=(+,\hat{\sigma}_2)$, respectively, realize
the pairs $(pos_1, neg_1)$ and $(pos_2, neg_2)$. (Here $\hat{\sigma}_1$ and $\hat{\sigma}_2$ are
the SPs obtained from $\sigma _1$ and $\sigma _2$ by deleting the initial $+$-sign. Hence they can begin with $+$ or $-$.) Then

(1) if the last position of $\hat{\sigma}_1$ is $+$, then for any $\varepsilon >0$
small enough, the polynomial $\varepsilon ^{d_2}P_1(x)P_2(x/\varepsilon )$ realizes
the SP $(1,\hat{\sigma}_1,\hat{\sigma}_2)$ and the pair $(pos_1+pos_2, neg_1+neg_2)$;

(2) if the last position of $\hat{\sigma}_1$ is $-$, then for any $\varepsilon >0$
small enough, the polynomial $\varepsilon ^{d_2}P_1(x)P_2(x/\varepsilon )$ realizes
the SP $(1,\hat{\sigma}_1,-\hat{\sigma}_2)$ and the pair $(pos_1+pos_2, neg_1+neg_2)$. (Here $-\hat{\sigma}_2$ is the SP obtained
from $\hat{\sigma}_2$ by changing each $+$ by $-$ and vice versa.)
\end{lm}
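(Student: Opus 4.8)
The plan is to analyze the polynomial $F(x) := \varepsilon^{d_2} P_1(x) P_2(x/\varepsilon)$ directly, tracking what happens to its roots and to its sign pattern as $\varepsilon \to 0^+$. The roots are the cleanest part: since $P_1(x)P_2(x/\varepsilon)$ vanishes exactly at the roots of $P_1$ and at $\varepsilon$ times the roots of $P_2$, the polynomial $F$ has $pos_1 + pos_2$ positive roots and $neg_1 + neg_2$ negative roots, with the $P_2$-roots scaled by $\varepsilon$ toward the origin. So the claim about the realized pair $(pos_1+pos_2, neg_1+neg_2)$ holds for every $\varepsilon > 0$ and is immediate. The entire content of the lemma is therefore the assertion about the \emph{sign pattern} of $F$ for small $\varepsilon$, and that is where the work lies.

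\textbf{Computing the sign pattern.} First I would write $P_1(x) = \sum_{i=0}^{d_1} b_i x^i$ (monic, $b_{d_1}=1$) and $P_2(y) = \sum_{k=0}^{d_2} c_k y^k$ (monic, $c_{d_2}=1$). Substituting $y = x/\varepsilon$ and multiplying by $\varepsilon^{d_2}$ gives $\varepsilon^{d_2}P_2(x/\varepsilon) = \sum_{k=0}^{d_2} c_k \varepsilon^{d_2-k} x^k$. Hence
\begin{equation}\label{product}
F(x) = \Bigl(\sum_{i=0}^{d_1} b_i x^i\Bigr)\Bigl(\sum_{k=0}^{d_2} c_k \varepsilon^{d_2-k} x^k\Bigr) = \sum_{\ell=0}^{d_1+d_2} \Bigl(\sum_{i+k=\ell} b_i c_k \varepsilon^{d_2-k}\Bigr) x^\ell~.
\end{equation}
The key observation is that in the coefficient of $x^\ell$, the terms carry powers $\varepsilon^{d_2-k}$, and the \emph{smallest} such power is achieved by taking $k$ as large as possible. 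I would split the analysis at the ``junction'' $\ell = d_1$: for $\ell \geq d_1$ the dominant (lowest-power-of-$\varepsilon$) term as $\varepsilon \to 0$ comes from $k = d_2$, $i = \ell - d_2$, giving sign equal to $\mathrm{sgn}(b_{\ell-d_2})$, i.e. the high-order coefficients of $F$ reproduce the signs of $P_1$; for $\ell \leq d_1$ the dominant term takes $i$ as large as possible, namely $i = d_1$, $k = \ell - d_1$, which reproduces $\mathrm{sgn}(c_{\ell - d_1})$, the coefficients of $P_2$. Thus for $\varepsilon$ small the leading-order sign of each coefficient of $F$ is determined, and the high block of $d_1$ signs matches $\hat\sigma_1$ while the low block matches $\hat\sigma_2$ (read from $P_2$'s coefficients, highest to lowest).

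\textbf{The junction and the main obstacle.} The delicate point — and the reason for the two cases in the statement — is the coefficient at the overlap, $\ell = d_1$: here both $(i,k)=(d_1, 0)$ and $(i,k)=(0,d_2)$ contribute at comparable or competing orders, and more importantly the \emph{sign} with which $\hat\sigma_2$ attaches depends on the trailing sign of $\hat\sigma_1$, i.e. on $\mathrm{sgn}(b_0)$. The trailing coefficient of $P_1$ is $b_0$, and by $P_1$ monic with $pos_1$ positive and $neg_1$ negative roots (all distinct), $\mathrm{sgn}(b_0) = (-1)^{pos_1}$; the last entry of $\hat\sigma_1$ is exactly $\mathrm{sgn}(b_0)$. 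When the terms from $P_2$ are carried across the junction they are multiplied by $b_0$, so the whole low block acquires the factor $\mathrm{sgn}(b_0)$. If that trailing sign is $+$ (case (1)), the signs $\hat\sigma_2$ appear unchanged, yielding $(1,\hat\sigma_1,\hat\sigma_2)$; if it is $-$ (case (2)), every sign of $\hat\sigma_2$ is flipped, yielding $(1,\hat\sigma_1,-\hat\sigma_2)$. I expect the main obstacle to be making this ``dominant term'' bookkeeping rigorous and uniform: one must check that for each $\ell$ the chosen term strictly dominates all others in \eqref{product} as $\varepsilon \to 0$, and that a single threshold $\varepsilon_0$ works simultaneously for all $d_1+d_2+1$ coefficients (so that no intermediate coefficient accidentally vanishes or changes sign). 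Since there are finitely many coefficients and each is a polynomial in $\varepsilon$ with a well-defined lowest-order nonzero term (the nonvanishing following from the nonzero leading coefficients $b_{\ell-d_2}$ or $c_{\ell-d_1}$ in the two regimes), taking $\varepsilon_0$ to be the minimum of the finitely many individual thresholds finishes the argument; the leading $+$ of $F$ is automatic from $F$ being (a positive scalar times) monic.
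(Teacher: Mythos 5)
The paper itself does not prove Lemma~\ref{lmconcat}; it quotes it from \cite{FoKoSh}, so there is no in-paper proof to compare against. Your overall strategy --- expand $\varepsilon^{d_2}P_1(x)P_2(x/\varepsilon)$, identify for each coefficient the term carrying the lowest power of $\varepsilon$, and read off its sign --- is exactly the standard argument, your treatment of the roots is correct and complete, and your explanation of the case distinction via the factor $\mathrm{sgn}(b_0)$ is the right mechanism.

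However, the dominant-term bookkeeping, which you yourself single out as the substance of the proof, is wrong as written, and in a way that contradicts your own junction analysis. In the coefficient $\sum_{i+k=\ell}b_ic_k\varepsilon^{d_2-k}$ the lowest power of $\varepsilon$ is attained for $k$ as large as possible, i.e. $k=\min(d_2,\ell)$, so the correct split is at $\ell=d_2$, not at $\ell=d_1$: for $\ell\ge d_2$ the dominant term is $b_{\ell-d_2}c_{d_2}=b_{\ell-d_2}$, while for $\ell<d_2$ it is $b_0c_\ell\varepsilon^{d_2-\ell}$ (that is, $i=0$, $k=\ell$). Your prescription for the low block --- ``$i$ as large as possible, namely $i=d_1$, $k=\ell-d_1$'' --- selects the term with the \emph{highest} power of $\varepsilon$, i.e. the most negligible one (and for $\ell<d_1$ the index $k=\ell-d_1$ is negative, so the term does not even exist); moreover its sign would be $\mathrm{sgn}(b_{d_1}c_{\ell-d_1})=\mathrm{sgn}(c_{\ell-d_1})$, which carries no factor $b_0$ at all and therefore could not produce the case distinction (1) versus (2) of the lemma. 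The factor $b_0$ that you correctly invoke in the ``junction'' paragraph only emerges once the dominant term of the low block is identified as $b_0c_\ell\varepsilon^{d_2-\ell}$; as it stands, that paragraph is inconsistent with your formal identification of the dominant terms. With this correction the argument closes exactly as you describe: the coefficients of $x^\ell$ for $\ell=d_1+d_2,\ldots,d_2$ carry the signs of $\sigma_1$, those for $\ell=d_2-1,\ldots,0$ carry the signs of $\mathrm{sgn}(b_0)\cdot\hat{\sigma}_2$, each coefficient is a polynomial in $\varepsilon$ with nonzero lowest-order term, and a single threshold $\varepsilon_0$ obtained as the minimum over the finitely many coefficients makes all signs simultaneously correct.
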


\begin{rem}\label{remconcat}
{\rm One can prove that every SP $\sigma$ of length $d+1$, with $c$ sign changes and $p$ sign preservations, is realizable by a degree $d$ HP having $c$ distinct positive and $p$ distinct negative roots by applying $d-1$ times Lemma~\ref{lmconcat} with $P_2$ being each time a linear polynomial. If the second component of the SP $\sigma$ is $+$ (resp. is $-$), then one starts with $P_1=x+1$ (resp. with $P_1=x-1$). Suppose that one has thus constructed a degree $k$ HP $Q$, $1\leq k\leq d-1$, which realizes the SP $\sigma _k$ obtained from $\sigma$ by deleting the latter's last $d-k$ components. If the last two components of the SP $\sigma _{k+1}$ are different (resp. equal), then we apply Lemma~\ref{lmconcat} with $P_1=Q$ and $P_2=x-1$ (resp. with $P_1=Q$ and $P_2=x+1$). In this way the number of sign changes (resp. of sign preservations) of $\sigma _{k+1}$ is equal to the number of positive (resp. of negative) roots of the HP which realizes it. When one applies successively Lemma~\ref{lmconcat}, each next root (this is the root of $P_2(x/\varepsilon )$) has a modulus much smaller than the least of the moduli of the roots of $P_1$; this follows from the necessity to choose at each concatenation the number $\varepsilon$ sufficiently small. Therefore the moduli of the roots of the thus constructed HP realizing the SP $\sigma$ are all distinct. Moreover, the decreasing order of the moduli of positive and negative roots on $\mathbb{R}_+$ is the same as the order of sign changes and sign preservations when the SP is read from left to right. We call this order {\em canonical}. Thus it is always possible to realize a SP by a HP with canonical order of the moduli of its positive and negative roots on $\mathbb{R}_+$. The SPs $\Sigma _{1,d}$, $\Sigma _{d,1}$, $\Sigma _{1,d-1,1}$ and $\Sigma _{m,1,q}$ have only canonical realizations, see Theorem~\ref{tm1change}, Corollary~\ref{cor1change}, Theorem~\ref{tm2extrem} and Theorem~\ref{tmm1q}. For some SPs, not canonical realizations are also possible, see Examples~\ref{exexex}, \ref{exd5} and \ref{exgreat} or Theorems~\ref{tm1change}, \ref{tmq1} and~\ref{tmq1bis}.}  
\end{rem}

Now we explain how Lemma~\ref{lmconcat} can be used to construct real polynomials defining a given SP and realizing a given pair $(pos, neg)$. We are interested mainly in the case of HPs. Suppose that the polynomials $P_1$ and $P_2$, of degrees $m+n^{\flat}-1$ and $n^{\sharp}+q-1$, define the 
SPs $\Sigma _{m,n^{\flat}}$ and $\Sigma _{n^{\sharp},q}$, where $n^{\flat}+n^{\sharp}=n+1$. Then the polynomial 
$\varepsilon ^{d_2}P_1(x)P_2(x/\varepsilon )$ realizes the SP $\Sigma _{m,n,q}$, see part (2) of Lemma~\ref{lmconcat}.
Suppose that: 
\vspace{1mm}

{\em i)} exactly $m^*$ moduli of negative roots of $P_1$ are larger than the modulus of its positive root $\alpha$ and hence exactly $m+n^{\flat}-2-m^*$ such moduli are smaller than $\alpha$; 

{\em ii)} exactly $q^*$ moduli of negative roots of $P_2$ are smaller than the modulus of its positive root $\beta$ and hence exactly $n^{\sharp}+q-2-q^*$ such moduli are larger than $\beta$. 
\vspace{1mm}

For $\varepsilon >0$ small enough, the moduli of all roots of $P_2(x/\varepsilon )$ are smaller than the modulus of any of the roots of $P_1$.  Therefore the polynomial $\varepsilon ^{d_2}P_1(x)P_2(x/\varepsilon )$ has exactly $m^*$ moduli of negative roots which are larger than $\alpha$, exactly 

$$m+n^{\flat}-2-m^*+n^{\sharp}+q-2-q^*=m+n+q-3-m^*-q^*=d-2-m^*-q^*=n^*$$
such moduli belonging to the interval $(\varepsilon \beta , \alpha)$, and exactly $q^*$ such moduli which are smaller than $\varepsilon \beta$. The possible values of $m^*$, $n^{\flat}$,$n^{\sharp}$ and $q^*$ can be deduced from Theorem~\ref{tm1change} and Corollary~\ref{cor1change}.

\subsection{The case $2\leq d\leq 5$, $c=2$}

We sum up here what is proved in this paper about realizability of SPs in the generic case for $2\leq d\leq 5$, $c=2$. We remind that for given $d$, knowing the exhaustive answer to the question about  realizability of SPs with $c$ sign changes implies knowing the one for SPs with $d-c$ sign changes as well, see part (1) of Remarks~\ref{remsaction}. For $c=0$ (hence for $c=d$), the exhaustive answer is given by the observation 2) at the beginning of this paper. For $c=1$ (hence for $c=d-1$) the answer is given by Theorem~\ref{tm1change} and Corollary~\ref{cor1change}. For $2\leq d\leq 5$, we present here the exhaustive answer for $c=2$ (hence for $c=d-2$ as well). Thus for $d\leq 5$, we cover all possible generic cases.

For $d=2$, the exhaustive answer is provided by Example~\ref{exd2}. 

For $d=3$, the polynomials $P_1$, $P_2$ and $P_3$ (see Example~\ref{exd3}) show that the SP $\Sigma _{1,2,1}$ is realizable in all three possible generic situations of inequalities between the quantities $\alpha$, $\beta$ and $\gamma _1$. The SPs $\Sigma _{2,1,1}$ and $\Sigma _{1,1,2}$ have only canonical realizations, see Theorem~\ref{tmm1q} and Remark~\ref{remconcat}; examples of such realizations are given by the polynomials $P_4$ and  $P_4^R$, see Example~\ref{exd3}.

For $d=4$, the SPs $\Sigma _{1,1,3}$, $\Sigma _{2,1,2}$, $\Sigma _{3,1,1}$ and $\Sigma _{1,3,1}$ have only canonical realizations, see Theorems~\ref{tmm1q} and \ref{tm2extrem}. The realizable generic cases for the SP $\Sigma _{2,2,1}$ are illustrated in part (2) of Example~\ref{exexex}.  The cases $\gamma _1<\gamma _2<\beta <\alpha$ and $\gamma _1<\beta <\gamma _2<\alpha$ are not realizable with this SP, see Theorem~\ref{tmq1}. The corresponding results about the SP $\Sigma _{1,2,2}$ are then deduced using part (2) of Remarks~\ref{remsaction}.

For $d=5$, the SPs $\Sigma _{1,1,4}$, $\Sigma _{2,1,3}$, $\Sigma _{3,1,2}$, $\Sigma _{4,1,1}$ and $\Sigma _{1,4,1}$ have only canonical realizations, see Theorems~\ref{tmm1q} and \ref{tm2extrem}. The SP $\Sigma _{2,2,2}$ is realizable in all generic cases, see Example~\ref{exd5}. 
Consider the HP

\begin{equation}\label{eqexample}(x-0.1)(x-1)(x+1)^3=x^5+1.9x^4-0.2x^3-2x^2-0.8x+0.1~.\end{equation}
It defines the SP $\Sigma _{2,3,1}$ and one has $\beta =0.1$, $\alpha =\gamma _j=1$, $j=1$, $2$ and $3$. When its triple root at $-1$ bifurcates into three simple negative roots, its coefficients depend continuously on the bifurcation, so by nearby HPs one can realize the generic cases 

$$\begin{array}{cccc}
\beta <\alpha <\gamma _1<\gamma _2<\gamma _3&,&\beta <\gamma _1<\alpha <\gamma _2<\gamma _3&,\\  \\ \beta <\gamma _1<\gamma _2<\alpha <\gamma _3&{\rm and}&\beta <\gamma _1<\gamma _2<\gamma _3<\alpha&.
\end{array}$$
The SP $\Sigma _{2,3,1}$ is realizable by the HP

$$(x+1)(x-1)^2(x+2.1)^2=x^5+3.2x^4-0.79x^3-7.61x^2-0.21x+4.41~.$$
For $\varepsilon >0$ small enough, this is also the case of the polynomial 

$$(x+1)(x-1-\varepsilon )(x-1-2\varepsilon )(x+2.1+\varepsilon )(x+2.1+2\varepsilon )~~~\, ,~~~\, {\rm where}$$
$$\gamma _1=1<\beta =1+\varepsilon <\alpha =1 +2\varepsilon <\gamma _2=2.1+\varepsilon <\gamma _3=2.1+2\varepsilon ~.$$
According to Theorem~\ref{tmq1} there are no other realizable generic cases with the SP $\Sigma _{2,3,1}$. Taking the reverted of the HPs realizing the SP $\Sigma _{2,3,1}$ one realizes the SP $\Sigma _{1,3,2}$ in the corresponding generic cases, see part (2) of Remarks~\ref{remsaction}.

The SP $\Sigma _{3,2,1}$ is not realizable in the generic cases with $\gamma _1<\gamma _2<\beta$ or with $\gamma _1<\beta <\gamma _2<\alpha$, see Theorem~\ref{tmq1}. It is realizable for $\gamma _1<\beta <\alpha <\gamma _2<\gamma _3$ by the HP

$$\begin{array}{l}
(x+1)(x-1.5)(x-1.6)(x+100)(x+1000)=\\ \\ 
x^5+1097.9x^4+97689.3x^3-2.107676\times 10^5x^2-67360x+2.4\times10^5~.
\end{array}$$
In the generic cases $\beta <\alpha <\gamma _1<\gamma _2<\gamma _3$, $\beta <\gamma _1<\alpha <\gamma _2<\gamma _3$ and $\beta <\gamma _1<\gamma _2<\alpha <\gamma _3$ the SP $\Sigma _{3,2,1}$ is realizable by HPs of the form $(1+\varepsilon x)Q_3(x)$, $(1+\varepsilon x)Q_4(x)$ and $(1+\varepsilon x)Q_2(x)$ respectively, where $\varepsilon >0$ is small enough and the HPs $Q_j$ are the ones from Example~\ref{exexex}. Indeed, the leading coefficient in all three cases equals $\varepsilon >0$ and the other coefficients are close to the ones of $Q_j$. The quantity $\gamma _3$ equals $1/\varepsilon$. 

\begin{prop}\label{propprop}
The SP $\Sigma _{3,2,1}$ is not realizable in the generic case $\beta <\gamma _1<\gamma _2<\gamma _3<\alpha$.
\end{prop}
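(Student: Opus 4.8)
The plan is to assume, for contradiction, that some HP
$$P=(x-\alpha)(x-\beta)(x+\gamma_1)(x+\gamma_2)(x+\gamma_3)=x^5+a_4x^4+a_3x^3+a_2x^2+a_1x+a_0$$
realizes $\Sigma_{3,2,1}$ (so that $d=5$) with $\beta<\gamma_1<\gamma_2<\gamma_3<\alpha$, and then to reach a contradiction by inspecting a single coefficient. Reading the SP $\Sigma_{3,2,1}=(+,+,+,-,-,+)$ one sees that realizability forces $a_3>0$. Expanding the product $(x^2-(\alpha+\beta)x+\alpha\beta)(x^3+e_1(\gamma)x^2+e_2(\gamma)x+e_3(\gamma))$ and collecting the coefficient of $x^3$ gives
$$a_3=e_2(\gamma)-(\alpha+\beta)e_1(\gamma)+\alpha\beta .$$
Thus the whole statement reduces to showing that the hypothesis $\beta<\gamma_j<\alpha$ for $j=1,2,3$ is incompatible with $a_3>0$.

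The key step is a sharp convexity estimate for $a_3$ viewed as a function of $e_1(\gamma)$. Writing $e_2(\gamma)=(e_1(\gamma)^2-(\gamma_1^2+\gamma_2^2+\gamma_3^2))/2$ and using the elementary inequality $\gamma_1^2+\gamma_2^2+\gamma_3^2\geq e_1(\gamma)^2/3$, I would bound
$$a_3\leq \frac{1}{3}e_1(\gamma)^2-(\alpha+\beta)e_1(\gamma)+\alpha\beta=:h(e_1(\gamma)) .$$
Since each $\gamma_j$ lies in $(\beta,\alpha)$, one has $e_1(\gamma)\in(3\beta,3\alpha)$. The quadratic $h$ is convex, and a direct computation gives $h(3\beta)=h(3\alpha)=-2\alpha\beta<0$; as a convex function attains its maximum over a segment at an endpoint, it follows that $h<0$ on all of $[3\beta,3\alpha]$. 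Hence $a_3\leq h(e_1(\gamma))<0$, contradicting $a_3>0$ and proving the proposition.

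I expect the only genuine difficulty to be recognizing that one coefficient decides the question and pinning down the correct estimate; once $a_3$ is isolated the argument is immediate and uses neither the genericity nor the order of the $\gamma_j$, but only $\beta<\gamma_j<\alpha$ for every $j$. It is worth stressing that the remaining sign conditions $a_4>0$, $a_2<0$, $a_1<0$ play no role: they are all compatible with the forbidden configuration, which is exactly why the Vieta-type inequalities and the derivative/reversion arguments that settle the neighbouring cases (as in Theorem~\ref{tmq1bis}) fall short at this extremal ordering, and the obstacle is precisely to locate the one coefficient whose sign is forced.
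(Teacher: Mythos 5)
Your proof is correct, and it takes a genuinely different and substantially more elementary route than the paper. The identification $a_3=e_2(\gamma)-(\alpha+\beta)e_1(\gamma)+\alpha\beta$ is right, the bound $e_2(\gamma)\leq e_1(\gamma)^2/3$ (from $\gamma_1^2+\gamma_2^2+\gamma_3^2\geq e_1(\gamma)^2/3$) is right, and the endpoint values $h(3\beta)=h(3\alpha)=-2\alpha\beta<0$ check out, so convexity of $h$ forces $a_3<0$ whenever all three $\gamma_j$ lie in $(\beta,\alpha)$ --- incompatible with the sign $a_3>0$ demanded by $\Sigma_{3,2,1}$. The paper instead argues topologically: it shows (Lemma~\ref{lmexclude}) that no boundary point of the root-configuration set $U$ realizes the SP, uses that the Vieta map is a local diffeomorphism (Lemma~\ref{lmVietta}) to run a path in $U$ from a hypothetical realization of $\Sigma_{3,2,1}$ to a known realization of $\Sigma_{2,3,1}$, shows the path can only cross coefficient hyperplanes at $\{a_3=0\}$, and then excludes $a_4>0,\ a_3=0$ on $U$ by a Vieta computation (Lemma~\ref{lmexcludebis}). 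Your argument replaces all of this machinery with a single closed inequality valid on all of $U$ (indeed on its closure within $\{\beta>0\}$, so it also subsumes the degenerate configurations handled separately in Lemma~\ref{lmexclude} as far as this proposition is concerned), and it makes transparent exactly which coefficient obstructs realizability; what it does not give, and what the paper's connectivity argument is structured to illustrate, is the relation between the orthant of $\Sigma_{3,2,1}$ and the adjacent realizable orthant of $\Sigma_{2,3,1}$. One cosmetic caveat: your closing claim that $a_4>0$, $a_2<0$, $a_1<0$ are ``all compatible with the forbidden configuration'' is an unproved aside and is not needed; the proof stands on the sign of $a_3$ alone.
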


Before proving the proposition we remind that making use of part (2) of Remarks~\ref{remsaction} and knowing the answer about the SP $\Sigma _{3,2,1}$ one obtains the answer to the question in which generic cases is realizable the SP~$\Sigma _{1,2,3}$.

\begin{proof}
$1^0$. We consider the polynomial $P:=x^5+a_4a^4+a_3x^3+a_2x^2+a_1x+a_0$. We denote by $\tilde{R}\cong \mathbb{R}^5$ the space of the coefficients $a_j$. We consider the set 

$$\mathbb{R}_+^5~\supset ~U~:=~\{ ~\Gamma :=(\alpha , \beta , \gamma _1, \gamma _2, \gamma _3)~|~\beta <\gamma _1<\gamma _2<\gamma _3<\alpha ~\} $$
and its image $V$ in $\tilde{R}$ via the Vietta mapping which sends the quintuple $\Gamma$ into the quintuple of coefficients of the polynomial $(x-\alpha )(x-\beta )(x+\gamma _1)(x+\gamma _2)(x+\gamma _3)$ (excluding the leading coefficient $1$).  The closure $\bar{U}$ consists of $U$ and of quintuples $\Gamma$ for which at least one of the following equalities holds true:

\begin{equation}\label{eq5cases}
\beta =0~~~,~~~\beta =\gamma _1~~~,~~~\gamma _1=\gamma _2~~~,~~~\gamma _2=\gamma _3~~~,~~~\gamma _3=\alpha~.
\end{equation}
\begin{lm}\label{lmexclude}
There exists no HP defining the SP $\Sigma _{3,2,1}$ and satisfying at least one of the equalities~(\ref{eq5cases}).
\end{lm}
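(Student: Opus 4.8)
The plan is to reduce all five degeneracies in (\ref{eq5cases}) to a single sign obstruction on one coefficient, rather than treating each on its own. First I would record that every configuration listed in (\ref{eq5cases}) is a boundary point of $\bar U$, so it still obeys the chain $0\le \beta \le \gamma_1\le \gamma_2\le \gamma_3\le \alpha$; the prescribed equality merely tells us which consecutive pair coincides. The degenerate case $\beta =0$ is disposed of at once: then $x=0$ is a root, so $a_0=P(0)=0$, whereas $\Sigma_{3,2,1}=(+,+,+,-,-,+)$ forces $a_0>0$ (equivalently, $0$ is not a positive root). So it remains to handle $\beta=\gamma_1$, $\gamma_1=\gamma_2$, $\gamma_2=\gamma_3$ and $\gamma_3=\alpha$.

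For these four I would focus on the coefficient $a_3$, whose sign under $\Sigma_{3,2,1}$ is required to be $+$. Writing $P=(x-\alpha)(x-\beta)\prod_{j=1}^3(x+\gamma_j)$ and splitting $e_2$ of the roots into the positive--positive, positive--negative and negative--negative contributions gives
\[
a_3=e_2=\alpha\beta-(\alpha+\beta)(\gamma_1+\gamma_2+\gamma_3)+(\gamma_1\gamma_2+\gamma_1\gamma_3+\gamma_2\gamma_3).
\]
The crucial observation is monotonicity in $\alpha$: the coefficient of $\alpha$ equals $\beta-(\gamma_1+\gamma_2+\gamma_3)$, which is negative because the $\gamma_j$ are positive and $\gamma_1\ge\beta$. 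Hence on the admissible range $\alpha\ge\gamma_3$ the quantity $a_3$ is largest at $\alpha=\gamma_3$, where it collapses to $\gamma_1\gamma_2-\gamma_3^2-\beta(\gamma_1+\gamma_2)$; this is strictly negative since $\gamma_1\gamma_2\le\gamma_3^2$ and $\beta(\gamma_1+\gamma_2)>0$. Therefore $a_3<0$ throughout, contradicting $a_3>0$, and this one estimate rules out all four remaining cases simultaneously (the boundary evaluation is exactly what one also gets by factoring out $x^2-\alpha^2$ in the case $\gamma_3=\alpha$, which leaves a cubic $R$ with $a_3=e_2(R\text{-shift})-\alpha^2<0$; analogously $\beta=\gamma_1$ can be killed via $x^2-\beta^2$ by inspecting $a_1>0$).

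I expect the main conceptual obstacle to be the temptation to imitate the even/odd parity argument of Lemma~\ref{lmnotexist}, which settled the analogous coincidences for $\Sigma_{m,1,q}$. That argument exploited that all coefficients but one share a sign, so a coincidence $\gamma_\nu\in\{\alpha,\beta\}$ forces a sum of strictly positive terms to vanish. For $\Sigma_{3,2,1}$ this breaks down: there are \emph{two} negative coefficients, $a_2$ and $a_1$, of even and odd index respectively, so each parity class of $P$ already carries a sign change and the two equations $P_{\mathrm{even}}(\gamma_\nu)=0$, $P_{\mathrm{odd}}(\gamma_\nu)=0$ are individually satisfiable. This is precisely why I would abandon parity and extract the contradiction instead from the order-sensitive estimate on $a_3$, whose only genuine input is the reduction to the boundary value and the inequality $\gamma_1\gamma_2\le\gamma_3^2$ coming from $\gamma_1,\gamma_2\le\gamma_3$.
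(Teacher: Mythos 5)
Your argument is correct, and it takes a genuinely different -- and in fact sharper -- route than the paper. The paper treats the four non-trivial degeneracies one by one: for $\beta=\gamma_1$ and for $\gamma_3=\alpha$ it factors out $x^2-\beta^2$, resp.\ $x^2-\alpha^2$, and reads off the coefficient of $x^3$ of the cubic cofactor; for $\gamma_j=\gamma_{j+1}$ it forms the auxiliary combination $N^0=a_3+\beta a_4$ and shows it is negative although the SP forces it to be positive. You instead prove one inequality that covers all four cases at once: with $e_1=\gamma_1+\gamma_2+\gamma_3$ and $e_2=\gamma_1\gamma_2+\gamma_1\gamma_3+\gamma_2\gamma_3$, the coefficient $a_3=\alpha\beta-(\alpha+\beta)e_1+e_2$ is affine and strictly decreasing in $\alpha$ (slope $\beta-e_1\leq -\gamma_2-\gamma_3<0$), so on the admissible range $\alpha\geq\gamma_3$ it is bounded above by its value at $\alpha=\gamma_3$, which equals $\gamma_1\gamma_2-\gamma_3^2-\beta(\gamma_1+\gamma_2)<0$ once $\beta>0$. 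I have checked both the boundary evaluation and the sign of the slope; together with the $a_0=P(0)=0$ observation for $\beta=0$, this disposes of all five equalities in~(\ref{eq5cases}). Two remarks. First, your estimate is strictly stronger than the lemma: it shows $a_3<0$ on the whole of $\bar{U}\cap\{\beta>0\}$, in particular on the open set $U$ itself, so it proves Proposition~\ref{propprop} directly and makes the path-deformation argument, together with Lemmas~\ref{lmVietta} and~\ref{lmexcludebis}, unnecessary; you should state this explicitly, as it is the real payoff of your approach. Second, the parenthetical claim that $\beta=\gamma_1$ ``can be killed via $x^2-\beta^2$ by inspecting $a_1>0$'' is loosely worded: the SP requires $a_1<0$, whereas factoring out $x^2-\beta^2$ gives $a_1=-\beta^2B$ with $B=\gamma_2\gamma_3-(\gamma_2+\gamma_3)\alpha<0$, hence $a_1>0$, and it is this clash that yields the contradiction; nothing in your main argument depends on that aside.
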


The lemma is proved after the proposition.
Thus if some HP defined by a quintuple $\Gamma _0$ defines the SP $\Sigma _{3,2,1}$, then $\Gamma _0$ is from the interior of $U$. The set $U$ being contractible one can connect $\Gamma _0$ by a $C^1$-smooth path $\mathcal{P}\subset U$ with a quintuple $\Gamma _1$ from the interior of $U$ which realizes the SP $\Sigma _{2,3,1}$; as we saw in the lines that follow equality~(\ref{eqexample}), such a quintuple $\Gamma _1$ exists. 

The path $\mathcal{P}$ intersects at least one of the hyperplanes $\{ a_j=0\} \subset \tilde{R}$. 

\begin{lm}\label{lmVietta}
The Vietta mapping is a local diffeomorphism at any point of the interior of $U$ onto its image.
\end{lm}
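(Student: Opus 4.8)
The plan is to factor the Vietta mapping through the roots of the polynomial and to identify its Jacobian with a Vandermonde-type determinant. Label the five roots of $(x-\alpha)(x-\beta)(x+\gamma_1)(x+\gamma_2)(x+\gamma_3)$ as $r_1:=\alpha$, $r_2:=\beta$, $r_3:=-\gamma_1$, $r_4:=-\gamma_2$, $r_5:=-\gamma_3$. The reparametrization $\Gamma \mapsto (r_1,\ldots,r_5)$ is a linear isomorphism of $\mathbb{R}^5$ (its Jacobian equals $(-1)^3=-1$), so the Vietta mapping differs from the map $E:(r_1,\ldots,r_5)\mapsto (a_4,a_3,a_2,a_1,a_0)$ only by this fixed invertible linear change. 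It therefore suffices to prove that $E$ is a local diffeomorphism at every point at which the $r_i$ are pairwise distinct.

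For this I would differentiate the relation $P(x)=\prod_{i=1}^5(x-r_i)$. One has $\partial P/\partial r_i=-\prod_{j\neq i}(x-r_j)=:-P_i(x)$, so the $i$-th column of the Jacobian matrix of $E$ consists, up to sign, of the coefficients of the degree-four polynomial $P_i$. The five polynomials $P_1,\ldots,P_5$ are linearly independent in the five-dimensional space of polynomials of degree $\le 4$ exactly when the $r_i$ are pairwise distinct, since then $P_i(r_i)=\prod_{j\neq i}(r_i-r_j)\neq 0$ while $P_i(r_k)=0$ for $k\neq i$ (this is the Lagrange interpolation basis). Equivalently, the Jacobian determinant equals, up to sign, the Vandermonde product $\prod_{1\le i<j\le 5}(r_i-r_j)$. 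In either formulation the Jacobian is nonsingular precisely when all roots are distinct.

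It remains to check that this genericity holds on the interior of $U$. There the defining inequalities are strict, $0<\beta<\gamma_1<\gamma_2<\gamma_3<\alpha$, so the two positive roots $\alpha,\beta$ are distinct, the three negative roots $-\gamma_1,-\gamma_2,-\gamma_3$ are distinct, and a positive root never coincides with a negative one. Hence the $r_i$ are pairwise distinct, the Jacobian of the Vietta mapping does not vanish, and the inverse function theorem yields that the mapping is a local diffeomorphism onto its image. The argument is essentially routine; the only step demanding any care is the identification of the Jacobian (via the independence of the $P_i$, or via the Vandermonde determinant), after which the conclusion is immediate from the strictness of the inequalities cutting out the interior of $U$.
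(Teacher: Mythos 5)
Your proof is correct, but it reaches the nonvanishing of the Jacobian by a different route than the paper. The paper factors the Vietta mapping through the power-sum (Vandermonde) mapping $\Gamma\mapsto(\varphi_1,\ldots,\varphi_5)$ with $\varphi_k$ the $(5-k)$-th power sums of the roots: the Jacobian of that mapping is, up to a nonzero constant, the Vandermonde determinant of the five roots, and Newton's identities give a triangular (hence invertible) polynomial change of coordinates between the $\varphi_k$ and the $a_k$. You instead differentiate the product $P(x)=\prod_{i=1}^5(x-r_i)$ directly, observe that the $i$-th column of the Jacobian consists of the coefficients of $-P_i(x)=-\prod_{j\neq i}(x-r_j)$, and prove linear independence of the $P_i$ by evaluating at the roots -- i.e.\ by recognizing them as the Lagrange interpolation basis -- which again identifies the determinant, up to sign, with the Vandermonde product $\prod_{i<j}(r_i-r_j)$. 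Both arguments hinge on exactly the same genericity condition (the five roots $\alpha$, $\beta$, $-\gamma_1$, $-\gamma_2$, $-\gamma_3$ are pairwise distinct on the interior of $U$, since the defining inequalities are strict and a positive root cannot equal a negative one), and both are complete. Your version avoids invoking Newton's identities and the intermediate power-sum coordinates, at the price of the small computation identifying the Jacobian columns; the paper's version avoids that computation by quoting the standard fact that the power-sum map has Vandermonde Jacobian. Either is acceptable; there is no gap in your argument.
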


The lemma is proved at the end of the paper. It implies that one can modify the path $\mathcal{P}$ so that at any point of $\mathcal{P}$ at most one equality of the form $a_j=0$ holds true. Moreover, one can parametrize $\mathcal{P}$ by $t\in [0,1]$ so that for any point satisfying the equality $a_j=0$ there exists an open interval $(u,v)=J\subset [0,1]$ such that 

{\em i)} $a_j=0$ for $t=(u+v)/2$, 

{\em ii)} $a_j\neq 0$ for $t\in J\setminus \{ (u+v)/2\}$, 

{\em iii)} $a_j$ has different signs for $t\in (u,(u+v)/2)$ and $t\in ((u+v)/2,v)$ and 

{\em iv)} for $t\in J$, there exists a single index $j$ with $a_j$ satisfying properties {\em i)} -- {\em iii)}. 

Consider the point $\Gamma ^*\in \mathcal{P}$ closest to $\Gamma _0$ for which one has $a_j=0$ for some $j$. One cannot have $j=0$, because $\beta >0$. It is impossible to have $j=1$ or $j=2$, because then for $t\in ((u+v)/2,v)$, one would have the SP $\Sigma _{3,1,2}$ or $\Sigma _{4,1,1}$ realized by a quintuple from $U$ which contradicts Theorem~\ref{tmm1q}. One cannot have $j=4$ either, because then for $t\in ((u+v)/2,v)$, one would have $c=4$. There remains the only possibility $j=3$. 

\begin{lm}\label{lmexcludebis}
There exists no degree $5$ HP satisfying the conditions 
$0<\beta <\gamma _1<\gamma _2<\gamma _3<\alpha$ and $a_4>0$, $a_3=0$, $a_2<0$, $a_1<0$, $a_0>0$.
\end{lm}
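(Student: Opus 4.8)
The plan is to express the constraints imposed by the sign pattern directly in terms of the roots via Vi\`ete's formulas and then derive a contradiction. Writing $P=(x-\alpha)(x-\beta)\prod_{j=1}^{3}(x+\gamma_j)$ with $0<\beta<\gamma_1<\gamma_2<\gamma_3<\alpha$, the hypotheses $a_3=0$, $a_4>0$, $a_2<0$, $a_1<0$, $a_0>0$ translate into one linear equation and several inequalities among the symmetric functions of $\alpha,\beta,\gamma_1,\gamma_2,\gamma_3$. The key signs to exploit are $a_4>0$ (i.e. $\alpha+\beta<\gamma_1+\gamma_2+\gamma_3$, so the positive roots are ``light'' in the first symmetric function) together with $a_3=0$ and $a_1<0$. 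Note $a_1=-a_0\big(1/\alpha+1/\beta-\sum 1/\gamma_j\big)$ with $a_0>0$, so $a_1<0$ gives $1/\alpha+1/\beta>\sum_{j}1/\gamma_j$; since $\alpha$ is the \emph{largest} of all five moduli, $1/\alpha$ is small, and this forces $1/\beta$ to dominate $\sum 1/\gamma_j$, i.e. $\beta$ must be quite small. I would first record these three translated conditions cleanly.

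Next I would try to combine the ``first symmetric function'' inequality $\alpha+\beta<\sum\gamma_j$ (from $a_4>0$) with the ``reciprocal'' inequality $1/\alpha+1/\beta>\sum 1/\gamma_j$ (from $a_1<0$), exactly as in the proof of Theorem~\ref{tm2extrem}, where the analogous pair of inequalities was shown to be incompatible with having roots straddling the interval. The geometric content is that $\beta$ is smaller than all $\gamma_j$ while $\alpha$ is larger than all $\gamma_j$; so in the first inequality the pair $\{\alpha,\beta\}$ should be \emph{heavier} than a comparable collection of $\gamma$'s (since $\alpha$ is the maximum), yet $a_4>0$ says the opposite. I expect that using $a_3=0$ as the crucial extra equation, one can pin down a forbidden balance: $a_3=0$ expresses $e_3$ of the five moduli (with appropriate signs) vanishing, and I would expand it to isolate $\beta$ and show that the value of $\beta$ forced by $a_3=0$ is inconsistent with the bound coming from $a_1<0$ combined with $\alpha>\gamma_3$.

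Concretely, I would fix the $\gamma_j$ and $\alpha$ and view each of $a_4$, $a_3$, $a_1$ as affine or rational functions of $\beta$. The equation $a_3(\beta)=0$ determines $\beta$ (or a small range for it) as a function of the other four quantities; substituting this into $a_4$ and $a_1$ should violate one of the required inequalities. Alternatively, and perhaps more robustly, I would argue by a substitution/normalization that reduces the number of free parameters: scale so that, say, $\gamma_2=1$, and then treat the problem as one of showing that a certain semialgebraic system in four variables is empty, which one can attack by isolating the monotonicity of the relevant expressions in $\beta$ and $\alpha$ on the region $\beta<\gamma_1<1<\gamma_3<\alpha$. The main obstacle I anticipate is precisely making the elimination clean: unlike the degree-$4$ argument in Theorem~\ref{tm2extrem}, here there are five roots and an extra middle coefficient condition $a_2<0$ that may also be needed to close the argument, so the challenge is to identify the \emph{minimal} combination of the five sign conditions that already forces a contradiction, rather than grinding through the full five-variable Vi\`ete expansion. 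I would therefore first test whether $a_4>0$, $a_3=0$, $a_1<0$ alone suffice (mirroring the two-inequality mechanism of Theorem~\ref{tm2extrem}), bringing in $a_2<0$ or $a_0>0$ only if that trio proves insufficient.
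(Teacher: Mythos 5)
You have correctly located the two conditions that matter ($a_4>0$ and $a_3=0$), but you never actually derive the contradiction, and the mechanism you lean on most heavily would not close the argument. The pair of inequalities you extract from $a_4>0$ and $a_1<0$, namely $\alpha+\beta<\gamma_1+\gamma_2+\gamma_3$ and $1/\alpha+1/\beta>1/\gamma_1+1/\gamma_2+1/\gamma_3$, is \emph{not} contradictory on the region $0<\beta<\gamma_1<\gamma_2<\gamma_3<\alpha$: take $\gamma_1,\gamma_2,\gamma_3$ close to $1$, $\alpha$ slightly larger than $1$ and $\beta$ small (say $\beta=0.1$); both inequalities then hold. So the two-inequality mechanism of Theorem~\ref{tm2extrem} does not transfer to this situation, and everything hinges on the elimination step that you describe only conditionally (``substituting this into $a_4$ and $a_1$ should violate one of the required inequalities''). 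That step is the whole proof, and it is absent from the proposal.

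The missing computation is short. With $e_1:=\gamma_1+\gamma_2+\gamma_3$ and $e_2:=\gamma_1\gamma_2+\gamma_1\gamma_3+\gamma_2\gamma_3$ one has $a_4=-\alpha-\beta+e_1>0$, hence $\alpha-e_1<0$; and $a_3=\alpha\beta-(\alpha+\beta)e_1+e_2=0$ gives $\beta(\alpha-e_1)=\alpha e_1-e_2$, i.e. $\beta=(\alpha e_1-e_2)/(\alpha-e_1)$. Since $\alpha e_1-e_2=(\alpha-\gamma_1)\gamma_2+(\alpha-\gamma_2)\gamma_3+(\alpha-\gamma_3)\gamma_1>0$ (this is where the ordering $\gamma_j<\alpha$ enters), the numerator is positive and the denominator negative, so $\beta<0$ --- a contradiction. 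Note that $a_1<0$, $a_2<0$ and $a_0>0$ are never needed; your stated plan of testing which minimal combination of sign conditions suffices would have succeeded had you pushed the $a_4>0$, $a_3=0$ computation through, but as written the proposal stops short of a proof.
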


The lemma (whose proof follows) finishes the proof of Proposition~\ref{propprop}. Indeed, by Lemma~\ref{lmexclude} no point of the boundary of $U$ realizes the SP $\Sigma _{3,2,1}$, and we just showed that this cannot be the case of a point $\Gamma _0$ from the interior of $U$ either. 
\end{proof}

\begin{proof}[Proof of Lemma~\ref{lmexcludebis}]
Suppose that such a HP exists. Recall that we denote by $e_1$ and $e_2$ the quantities $\gamma _1+\gamma _2+\gamma _3$ and $\gamma _1\gamma _2+\gamma _1\gamma _3+\gamma _2\gamma _3$. Then $a_4=-\alpha -\beta +e_1>0$, i.e. $0<\beta <e_1-\alpha$, and 

$$a_3=\alpha \beta -(\alpha +\beta )e_1+e_2=0~~~{\rm ,~\. i.~e.}~~~
\beta =(\alpha e_1-e_2)/(\alpha -e_1)~.$$
But $\alpha e_1-e_2=(\alpha -\gamma _1)\gamma _2+(\alpha -\gamma _2)\gamma _3+(\alpha -\gamma _3)\gamma _1>0$ while $\alpha -e_1<0$. Hence $\beta <0$ -- a contradiction.
\end{proof}

\begin{proof}[Proof of Lemma~\ref{lmexclude}]
Suppose that such a HP $T$ exists. Then $\beta >0$, otherwise $T(0)=0$ and $T$ does not define the SP $\Sigma _{3,2,1}$. Hence $\alpha >0$ and $\gamma _j>0$, $j=1$, $2$ and $3$. 

Suppose that $\beta =\gamma _1$. Set 

$$F:=(x+\gamma _2)(x+\gamma _3)(x-\alpha )=x^3+Ax^2+Bx+C~.$$
Then $B=-(\gamma _2+\gamma _3)\alpha +\gamma _2\gamma _3$ and 

$$T=(x^2-\beta ^2)F=x^5+Ax^4+(B-\beta ^2)x^3+(C-\beta ^2A)x^2-\beta ^2Bx-\beta ^2C~.$$
The condition $B-\beta ^2>0$ implies 

$$(\gamma _2+\gamma _3)\alpha <\gamma _2\gamma _3-\beta ^2<\gamma _2\gamma _3~.$$ 
However $(\gamma _2+\gamma _3)\alpha >\gamma _2\alpha \geq \gamma _2\gamma _3$ which is a contradiction. 

Suppose that $\alpha =\gamma _3$. Set 

$$G:=(x+\gamma _1)(x+\gamma _2)(x-\beta )=x^3+A^*x^2+B^*x+C^*~.$$
Then $B^*=-(\gamma _1+\gamma _2)\beta +\gamma _1\gamma _2$ and

$$T=(x^2-\alpha ^2)G=x^5+A^*x^4+(B^*-\alpha ^2)x^3+(C^*-\alpha ^2A^*)x^2-\alpha ^2B^*x-\alpha ^2C^*~.$$
On the one hand one must have $B^*-\alpha ^2>0$, but on the other  

$$B^*-\alpha ^2=-(\gamma _1+\gamma _2)\beta +(\gamma _1\gamma _2-\alpha ^2)<0$$
which is a contradiction. Suppose that $\gamma _j=\gamma _{j+1}=g$, where $j=1$ or $2$. We set 

$$T:=(x-\alpha )(x-\beta )(x+g)^2(x+h)=x^5+Mx^4+Nx^3+\cdots ~,$$
where $h=\gamma _3$ if $j=1$ and $h=\gamma _1$ if $j=2$. Then 

$$M=-\alpha -\beta +2g+h~~~\, {\rm and}~~~\, N=\alpha \beta -2g\alpha -2g\beta +g^2-h\alpha -h\beta +2gh~.$$
Hence $N^0:=N+\beta M=-2g\alpha +g^2-g\alpha +2gh-\beta ^2>0$. But for $j=1$, one has $g\leq h$ and 

$$N^0=(-2g\alpha +2gh)+(g^2-h\alpha )-\beta ^2<0$$
while for $j=2$, one has $h\leq g$ and 

$$N^0=(-g\alpha +gh)+(-h\alpha +gh)+(-g\alpha +g^2)-\beta ^2<0$$
which is a contradiction.

\end{proof}

\begin{proof}[Proof of Lemma~\ref{lmVietta}]
Consider the Vandermonde mapping 

$$(\beta , \gamma _1, \gamma _2, \gamma _3, \alpha )\mapsto (\varphi _1, \varphi _2, \varphi _3, \varphi _4, \varphi _5)~,$$
where $\varphi _k=\beta ^{5-k}+(-\gamma _1)^{5-k}+(-\gamma _2)^{5-k}+(-\gamma _3)^{5-k}+\alpha ^{5-k}$. For each point of the interior of $U$, this mapping defines a local diffeomorphism, because its determinant is up to a constant nonzero factor the Vandermonde determinant 
$W(\beta , \gamma _1, \gamma _2, \gamma _3, \alpha )\neq 0$. On the other hand the quantities $\varphi _k$ and $a_k$ are connected with one another by formulas of the form 

$$(5-k)a_k=-5\varphi _k+Q_k(\varphi _4,\ldots ,\varphi _{k+1})~~~,~~~5\varphi _k=-(5-k)a_k+Q_k^*(a_4,\ldots ,a_{k+1})~,$$
where $Q_k$ and $Q_k^*$ are polynomials. Hence the Vietta mapping also defines a local diffeomorphism.
\end{proof}

\end{document}